\documentclass[11pt]{article}

\usepackage[T1]{fontenc}
\usepackage{lmodern}
\usepackage{amsmath,amssymb,amsthm,mathtools}
\usepackage{booktabs}
\usepackage[margin=1in]{geometry}
\usepackage[colorlinks=true,linkcolor=blue,citecolor=blue,urlcolor=blue]{hyperref}

\newtheorem{theorem}{Theorem}[section]
\newtheorem{proposition}[theorem]{Proposition}
\newtheorem{lemma}[theorem]{Lemma}

\theoremstyle{remark}

\newcommand{\tr}{\operatorname{tr}}
\newcommand{\rank}{\operatorname{rank}}

\title{The Exact Maximum of the Spectral Sum of Graphs}
\author{
Jing Huang$^1$ and Wei Wei$^2$\footnote{Email addresses:
jhuangmath@gzhu.edu.cn (J. Huang), weiweimath@sina.com (W. Wei). \newline
Corresponding Author: Jing Huang.}}

\date{\small
1.School of Mathematics and Information Science, Guangzhou University, Guangzhou  510006, China\\
2. Center of Intelligent Computing and Applied Statistics, School of Mathematics, Physics and
Statistics, Shanghai University of Engineering Science, Shanghai 201620, China\\
}

\begin{document}

\maketitle

\begin{abstract}
For a simple graph $G$ of order $n$, let
$S_2(G)=\lambda_1(G)+\lambda_2(G)$ denote its spectral sum.  We
determine, for every $n\geq5$, the exact maximum of $S_2(G)$ and all
equality cases.  The unique maximizer, up to isomorphism, is the
complement of the disjoint union of a suitably balanced complete
bipartite graph and isolated vertices, with the sizes of its three
parts determined by $n$ modulo $7$.  Denoting this graph by
$K_n^\star$, we further show that $ S_2(K_n^\star)\leq\frac{8n}{7}-2,$
with equality exactly when $7\mid n$.  This proves a conjecture of Kumar, Liu,  Monterde,  Pragada  and  Tait, which strengthens the
Aouchiche--Hansen 2010 conjecture  by extending it from connected graphs to
all graphs and by asserting uniqueness of the extremal graph. The
result also subsumes the 2008 conjecture of Ebrahimi B., Mohar,
Nikiforov, and Ahmady.
The proof combines Ky Fan's variational principle with a spectral
inequality for weighted Ferrers quotients to reduce the problem to an
explicit family whose complements have incidence rank one.  Exact
integer optimization and a separate equality analysis then yield the
maximum and uniqueness.
\end{abstract}
\medskip
\noindent\textbf{Keywords.}
Spectral sum; adjacency eigenvalues; extremal graph; chain graph;
Ferrers matrix.

\smallskip
\noindent\textbf{2020 Mathematics Subject Classification.}
05C50, 05C35, 15A18.

\section{Introduction}

All graphs are finite and simple.  For a real symmetric matrix $M$ of
order $n$, write
\[
 \lambda_1(M)\geq\cdots\geq\lambda_n(M),
 \qquad
 S_k(M)=\sum_{i=1}^k\lambda_i(M)
 \quad (1\leq k\leq n).
\]
For a graph $G$, write $S_k(G)=S_k(A(G))$.  Let $I_n$ and $J_n$
denote the identity matrix and the all-ones matrix of order $n$,
respectively, and omit the subscripts when the order is clear.
Both the largest and second largest adjacency eigenvalues have been
studied extensively; see the surveys
\cite{CvetkovicRowlinson1990,CvetkovicSimic1995} and the monograph
\cite{Stanic2015}.  The sum $S_2(G)$ is the first nontrivial partial
sum of the adjacency spectrum.  Whereas the spectral radius is
characterized by a single Rayleigh quotient, Ky Fan's principle
\cite{Fan1949} expresses $S_2(G)$ as the maximum Rayleigh trace over
orthonormal pairs.  The partial-sum viewpoint also connects $S_2$ with
graph energy:  $
 \mathcal E(G)=\sum_i|\lambda_i(G)|
              =2\max_{1\leq k\leq n}S_k(G);$
see \cite{Gutman1978,Nikiforov2016}.

General bounds for partial spectral sums were obtained by Mohar
\cite{Mohar2009} and subsequently refined using inertia and rank by
Das, Mojallal and Sun \cite{DasEtAl2019}.  Related work treats random
graphs \cite{Rocha2020}, symmetric and nonnegative matrices
\cite{Kolotilina2013,SunMinDasMatrices2026}, and exact extremal or
classification questions for trees and for connected graphs with small
spectral sum
\cite{KumarTrees2026,SunMinDasExtremal2026}.  These results do not
determine the unrestricted maximum at a fixed order.

The exact maximum over all graphs of a prescribed order has a separate
history.  Gernert conjectured that every graph $G$ of order $n$
satisfies $S_2(G)\leq n$, and verified the inequality for regular
graphs and several other graph classes; see
\cite{EbrahimiEtAl2008}.  Nikiforov  \cite{Nikiforov2006} subsequently disproved this conjecture
and established linear upper and lower bounds for the unrestricted
maximum.  A more precise conjectural picture
emerged from the work of Ebrahimi B., Mohar, Nikiforov and
Ahmady \cite{EbrahimiEtAl2008}.  Besides sharpening the general upper
bound, they showed that, for $n=7k$, the graph obtained from $K_{7k}$
by deleting the edges of $K_{2k,2k}$ has spectral sum
$8n/7-2.$
This construction shows that the leading coefficient in any universal
linear upper bound cannot be smaller than $8/7$, and led them to
conjecture that $ S_2(G)\leq 8n/7$
for every graph $G$.
The construction above belongs to a natural three-part family that
provides candidates at every order.  For $n\geq p+q$, define
\[
 K(n,p,q)=\overline{K_{p,q}\cup(n-p-q)K_1}
           =K_{n-p-q}\vee(K_p\cup K_q).
\]
We call the vertex sets of sizes $p$, $q$, and $n-p-q$ the
$p$-part, the $q$-part, and the universal part, respectively.
To specify the candidate of order $n$, write $n=7k+r$, where
$0\leq r\leq6$, and put
\begin{equation}\label{eq:sn-table}
 s_n=
 \begin{cases}
 4k,&r=0,1,\\
 4k+1,&r=2,\\
 4k+2,&r=3,4,\\
 4k+3,&r=5,\\
 4k+4,&r=6.
 \end{cases}
\end{equation}
Define
\begin{equation}\label{eq:extremal-candidate}
 K_n^\star=K\left(n,
  \left\lceil\frac{s_n}{2}\right\rceil,
  \left\lfloor\frac{s_n}{2}\right\rfloor\right).
\end{equation}
Thus the two nonadjacent parts differ in size by at most one, while
their total size is $4n/7+O(1)$.
With this notation, Aouchiche and Hansen
\cite[Conjecture~17]{AouchicheHansen2010} proposed $K_n^\star$ as the
residue-dependent extremal graph for connected graphs.  Their
formulation asserts that $K_n^\star$ maximizes $S_2$ and that
$ S_2(K_n^\star)\leq\frac{8n}{7}-2,$
with equality exactly when $7\mid n$, but does not include a
uniqueness clause.
Recently, Kumar, Liu, Monterde, Pragada and Tait  \cite[Theorem~1.2]{KumarEtAl2026} proved the universal
bound $ S_2(G)\leq8n/7$
for every graph $G$.  Together with
the construction given in \cite{EbrahimiEtAl2008},
this established $8/7$ as the optimal linear coefficient but left the
residue-dependent finite-order maximum and its equality cases open.
The authors subsequently recorded the remaining exact finite-order
problem, for all graphs and with uniqueness, as
Conjecture~5.1 in \cite{KumarEtAl2026}: $S_2(G)\leq S_2(K_n^\star)$ for
every graph $G$ of order $n\geq5$, with equality if and only if
$G\cong K_n^\star$.
Our main result, stated below, fully resolves this conjecture along with its sharp finite-order bound.

\begin{theorem}\label{thm:main}
For every graph $G$ of order $n\geq5$, we have
\begin{equation}\label{eq:main-bound}
 S_2(G)\leq S_2(K_n^\star).
\end{equation}
Equality in \eqref{eq:main-bound} holds if and only if
$G\cong K_n^\star$.  Moreover, $S_2(K_n^\star)\leq\frac{8n}{7}-2,$
with equality if and only if $7\mid n$.
\end{theorem}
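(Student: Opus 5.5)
We argue through Ky Fan's principle,
\[
S_2(G)=\max\{x^{T}Ax+y^{T}Ay:\ x\perp y,\ \|x\|=\|y\|=1\},
\]
and pass to complements. Write $A(G)=J-I-A(\overline G)$ and let $x,y$ be an optimal orthonormal pair, spanning a plane $W$. Then
\[
S_2(G)=\tr(P_WJ)-2-\tr\bigl(P_WA(\overline G)\bigr)
\]
with $\tr(P_WJ)=(\mathbf 1^Tx)^2+(\mathbf 1^Ty)^2\le n$. Maximizing $S_2(G)$ is therefore a trade-off: we want to keep $\mathbf 1$ nearly inside $W$ while making $\tr(P_WA(\overline G))$ as negative as possible. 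A negative contribution needs the edges of $\overline G$ to carry a sign-changing vector, and this suggests that $\overline G$ should be bipartite-like.

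The first step is a reduction to threshold/chain structure. Using Kelmans-type shifting, that is, moving neighbourhoods of $\overline G$ so that they become nested, we will show that $S_2$ does not decrease. This lets us assume that $\overline G$ is a chain (difference) graph plus isolated vertices, so its bipartite adjacency block is a Ferrers $0/1$ matrix $F$. For a fixed optimal pair, the eigenvectors can be taken constant on twin classes, and the problem becomes a weighted quotient: a Ferrers matrix with row weights $a_i$ and column weights $b_j$.

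The second step is the key inequality, the spectral inequality for weighted Ferrers quotients. Among Ferrers patterns with given weights, the relevant functional is maximized when the Ferrers matrix is all-ones, that is, when it has incidence rank one. Concretely, we aim to show that replacing $F$ by a full block $K_{p,q}$ on the support of its nonzero rows and columns, with appropriate sizes $p,q$, does not decrease the value of the $2\times2$ quotient objective. I would first try a convexity/majorization argument over the staircase. The reason is that the value depends on $F$ only through $u^TFv$-type bilinear forms in the Perron-like vectors. After this step, $G=K(n,p,q)$.

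The third step is explicit optimization. $K(n,p,q)$ has equitable partition $(p,q,m)$ with $m=n-p-q$, and quotient matrix
\[
Q=\begin{pmatrix}p-1&0&m\\0&q-1&m\\p&q&m-1\end{pmatrix}.
\]
The remaining eigenvalues are all $-1$, so $\lambda_1,\lambda_2$ are the top two roots of the cubic $\det(tI-Q)$. For fixed $s=p+q$, a direct comparison (symmetry plus concavity in $p-q$) shows that $p,q$ should be balanced. We then treat $f(s)=S_2$ as a function of $s$, solve the continuous problem, where the optimum is $s\approx4n/7$ with value $8n/7-2$ exactly when $s=4n/7$, and compare neighbouring integers $s$ case by case modulo $7$ to recover $s_n$. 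Strict inequalities in each comparison give uniqueness within the family. The bound $S_2(K_n^\star)\le 8n/7-2$ follows from the continuous maximum, which is attained only at an integer point when $7\mid n$.

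The main obstacle is the Ferrers reduction with strictness, since equality must be tracked through the shifting and rank-one steps. A non-complete bipartite $\overline G$, or extra edges of $\overline G$ among the universal part, must be shown to lose strictly. For that I would use the fact that the optimal $x,y$ have nonzero entries on every twin class when $n\ge5$, so that any edge change alters $\tr(P_WA(\overline G))$ strictly.
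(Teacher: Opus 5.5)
Your proposal has genuine gaps: Step 1 is unproved and its natural form is false, Step 2 (the core of the theorem) has no argument and its literal form is false, and the strictness argument for uniqueness does not work.

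\textbf{Step 1 (shifting).} The Kelmans argument for $\lambda_1$ works because a single positive Perron vector fixes the direction of the shift. Here, moving the edges $vw$ with $w\in N_{\overline G}(v)\setminus N_{\overline G}[u]$ over to $u$ changes $\tr(P_WA(\overline G))$ by $2\sum_w\bigl((x_u-x_v)x_w+(y_u-y_v)y_w\bigr)$. Since $\mathbf y$ changes sign, neither direction has a controlled sign. Concretely, for $n\geq6$ we have $\overline{K_n^\star}=K_{p,q}\cup cK_1$ with $p,q\geq2$. A $p$-vertex and a $q$-vertex have disjoint nonempty neighbourhoods, and nesting them in either direction creates a triangle in the complement. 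The new graph has the same number of edges but is not $K_n^\star$, so by the very uniqueness you are proving its $S_2$ is strictly smaller. If you restrict shifts to one side of a bipartition, you first need $\overline G$ bipartite, which shifting does not give, and monotonicity is still unproved.

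The paper gets bipartiteness and nesting at once from a first-order condition. For a maximizer with the most edges, Ky Fan gives $S_2(G+uv)\geq S_2(G)+2\kappa_{uv}$ and $S_2(G-uv)\geq S_2(G)-2\kappa_{uv}$, where $\kappa_{uv}=x_ux_v+y_uy_v$. Hence $uv\in E(G)$ exactly when $\kappa_{uv}\geq0$. With a positive Perron vector $\mathbf x$, edges of $\overline G$ join vertices where $y_u/x_u$ has opposite signs, and neighbourhoods are nested by $|y_u/x_u|$.

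\textbf{Step 2 (rank one).} This is the heart of the theorem, and you give no argument for it. Read literally, it is false. For $n=7k$, let $\overline G$ be $K_{2k,2k}$ plus $3k$ vertices each joined only to one fixed vertex $v_1$ of the second side. This is a chain graph without isolated vertices, so the full block on its support is $K_{5k,2k}$, giving $K_{5k}\cup K_{2k}$ with $S_2=n-2$. Subadditivity of $S_2$, however, gives $S_2(G)\geq S_2(K(n,2k,2k))-\sqrt{3k}=8k-2-\sqrt{3k}>n-2$ for $k\geq4$. If $p,q$ are left free, the claim is simply the theorem for chain complements.

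Your heuristic (for fixed $W$ the objective is linear in $F$) only reproduces the chain structure. Optimizing a linear functional over $0$--$1$ patterns selects the sign pattern of $\kappa_{uv}$, not an all-ones block; rank one is a genuinely spectral phenomenon. The missing ingredient is the tail inequality $\sum_{3\leq i<j}\mu_i\mu_j\leq0$ for the eigenvalues of $J-A(H)$ with $H$ a chain graph. The paper proves it from the coefficient signs of $\det(I+tM_c)$, read off from the $(2k+1)$-cycle support of $M_c^{-1}$, together with variation diminishing. Combined with rank-one interlacing and a nonsingular $2\times2$ minor, it gives a defect at least $3-\sqrt5>1/2$ whenever the incidence rank is at least two. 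Since $K_n^\star$ lies within $1/2$ of the envelope $f_n$, this excludes rank at least two.

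\textbf{Step 3 (optimization).} Only after that does Step 3 apply. It matches the paper's integer optimization, but comparing integers near $4n/7$ needs a global unimodality argument in $s$, such as the paper's sign analysis of $R_s$.

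\textbf{Uniqueness.} Nonzero coordinates do not make $\kappa_{uv}$ nonzero, and an arbitrary maximizer need not be edge-maximal or connected. The paper handles this by thresholding $\kappa$ to a matrix $\widehat B$ with the same optimal plane and proving $\widehat B=B(K_n^\star)$. It then checks $\kappa_{uv}>0$ on all one-entries of $\widehat B$ using the eigenvector for $-\tau_n$.
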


The proof separates the determination of the extremal value from the
uniqueness argument.  Choose a maximizer with as many edges as
possible.  Ky Fan's variational principle turns edge-maximality into a
threshold rule, which forces the complement to be a chain graph.  A new
sign-variation argument yields a spectral-tail inequality for the
associated weighted Ferrers quotient and, in turn, a uniform defect
whenever the incidence rank is at least two.  Since incidence rank zero
gives the complete graph, which is suboptimal, the complement must have
incidence rank one.  The maximizer therefore belongs to the family
$K(n,p,q)$, and exact integer optimization selects $K_n^\star$.
Finally, a separate analysis of equality in the threshold rule rules
out all other maximizers.

The remainder of the paper is organized in two sections.
Section~\ref{sec:basic} collects the variational and structural tools,
optimizes the family $K(n,p,q)$, and reduces an edge-maximal extremizer
to one whose complement is a chain graph.  Section~\ref{sec:technical}
establishes the Ferrers tail inequality and the uniform defect for
incidence rank at least two, and then combines these estimates with
the rank-one optimization to prove Theorem~\ref{thm:main}, including
uniqueness.

\section{Basic results and structural tools}
\label{sec:basic}

We first record Ky Fan's variational principle and optimize the
three-part family $K(n,p,q)$.  We then use the same principle to obtain
the chain-graph reduction used in the proof of the main theorem.

\begin{lemma}\label{lem:ky-fan}
Let $M$ be a real symmetric matrix of order $n\geq2$.  Then
\[
 S_2(M)=\max\bigl\{
 \mathbf x^{\mathsf T}M\mathbf x+
 \mathbf y^{\mathsf T}M\mathbf y:
 \mathbf x,\mathbf y\in\mathbb R^n
 \text{ are orthonormal}
 \bigr\}.
\]
If $\lambda_1(M)$ is simple, equality holds for
$\mathbf x,\mathbf y$ if and only if
$\operatorname{span}\{\mathbf x,\mathbf y\}$ is spanned by an
eigenvector for $\lambda_1(M)$ and an eigenvector for
$\lambda_2(M)$.
\end{lemma}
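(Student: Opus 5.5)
The plan is to diagonalize $M$ and reduce both the inequality and the equality characterization to an elementary weighted-sum argument. Let $M=\sum_{i=1}^n\lambda_i\mathbf u_i\mathbf u_i^{\mathsf T}$ with an orthonormal eigenbasis $\mathbf u_1,\dots,\mathbf u_n$ and $\lambda_i=\lambda_i(M)$. Taking $\mathbf x=\mathbf u_1$ and $\mathbf y=\mathbf u_2$ gives $\mathbf x^{\mathsf T}M\mathbf x+\mathbf y^{\mathsf T}M\mathbf y=\lambda_1+\lambda_2=S_2(M)$, so the maximum is at least $S_2(M)$.

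For the reverse inequality, fix an orthonormal pair $\mathbf x,\mathbf y$. Let $P$ be the orthogonal projection onto $W=\operatorname{span}\{\mathbf x,\mathbf y\}$ and put $w_i=\langle\mathbf u_i,\mathbf x\rangle^2+\langle\mathbf u_i,\mathbf y\rangle^2=\mathbf u_i^{\mathsf T}P\mathbf u_i$. Then
\[
 \mathbf x^{\mathsf T}M\mathbf x+\mathbf y^{\mathsf T}M\mathbf y=\tr(PM)=\sum_{i=1}^n\lambda_iw_i .
\]
The weights satisfy $0\le w_i\le 1$, because $P$ is a projection, and $\sum_i w_i=\tr P=2$. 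Hence
\[
 \lambda_1+\lambda_2-\sum_i\lambda_iw_i=(\lambda_1-\lambda_2)(1-w_1)+\sum_{i\ge3}(\lambda_2-\lambda_i)w_i\ \ge 0 .
\]
To check this identity, use $(1-w_1)-(w_2+\sum_{i\ge3}w_i-1)=2-\sum_i w_i=0$, so that $\lambda_2(1-w_2)=\lambda_2\bigl(w_1-1+\sum_{i\ge3}w_i\bigr)$. Both terms on the right are nonnegative, which proves the max formula.

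For equality, assume $\lambda_1>\lambda_2$. Equality forces $w_1=1$, so $\mathbf u_1\in W$. It also forces $w_i=0$ for every $i\ge3$ with $\lambda_i<\lambda_2$.

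The main care point is that $\lambda_2$ may be multiple. In that case I read ``an eigenvector for $\lambda_2$'' as any vector in the $\lambda_2$-eigenspace $E_2$. Write $W=\operatorname{span}\{\mathbf u_1,\mathbf z\}$ with $\mathbf z\perp\mathbf u_1$ a unit vector. The vanishing of the $w_i$ above means $\mathbf z$ has no component along eigenvectors with eigenvalue below $\lambda_2$, and it has none along $\mathbf u_1$. Therefore $\mathbf z\in E_2$, so $\mathbf z$ is an eigenvector for $\lambda_2$.

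Conversely, if $W$ is spanned by eigenvectors for $\lambda_1$ and $\lambda_2$, these are orthogonal since $\lambda_1\ne\lambda_2$. Choosing them as an orthonormal basis gives trace $\lambda_1+\lambda_2$. Since $\tr(PM)$ depends only on $W$, equality then holds for every orthonormal basis $\mathbf x,\mathbf y$ of $W$.
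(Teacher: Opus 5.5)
Your proof is correct and follows essentially the same argument as the paper. You diagonalize, write the Rayleigh trace as $\sum_i\lambda_iw_i$ with weights $0\le w_i\le1$ summing to $2$, and read off the equality case from $w_1=1$ and the vanishing of the weights on eigenvalues below $\lambda_2$; your explicit gap identity and your treatment of a multiple $\lambda_2$ simply spell out the paper's chain $\sum_i\lambda_ic_i\le\lambda_1c_1+\lambda_2(2-c_1)\le\lambda_1+\lambda_2$ in more detail.
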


\begin{proof}
Let $\mathbf v_1,\ldots,\mathbf v_n$ be an orthonormal eigenbasis of
$M$, with corresponding eigenvalues
$\lambda_1\geq\cdots\geq\lambda_n$, and let
$P=\mathbf x\mathbf x^{\mathsf T}
  +\mathbf y\mathbf y^{\mathsf T}.$
Then $P$ is the orthogonal projector onto
$\operatorname{span}\{\mathbf x,\mathbf y\}$.  Put
$c_i=\|P\mathbf v_i\|^2$.  Since $0\leq c_i\leq1$ and
$\sum_i c_i=\tr P=2$,
\[
\begin{aligned}
 \mathbf x^{\mathsf T}M\mathbf x+
 \mathbf y^{\mathsf T}M\mathbf y
 &=\tr(PM)=\sum_{i=1}^n\lambda_i c_i\\
 &\leq\lambda_1c_1+\lambda_2(2-c_1)
 \leq\lambda_1+\lambda_2.
\end{aligned}
\]
Taking $\mathbf x=\mathbf v_1$ and $\mathbf y=\mathbf v_2$ proves the
variational formula.
Suppose that $\lambda_1$ is simple.  Then
$\lambda_1>\lambda_2$, so equality in the last inequality forces
$c_1=1$, equivalently
$\mathbf v_1\in\operatorname{span}\{\mathbf x,\mathbf y\}$.
Equality in the preceding inequality forces the remaining line of
this span to lie in the eigenspace of $\lambda_2$.  The converse is
immediate.
\end{proof}

For a graph $G$ of order $n$, set
\[
 B(G)=A(G)+I_n,
 \qquad S_2(B(G))=S_2(G)+2.
\]

\begin{lemma}\label{lem:candidate-poly}
Let $p,q,c$ be positive integers with $p+q+c=n$. Then $B(K(n,p,q))$ has rank three, and its nonzero eigenvalues are the roots of $ t^3-nt^2+pqt+cpq=0.$
Two are positive and the third is $-\tau$ for some $\tau>0$. Moreover,
\[
 S_2(K(n,p,q))=n-2+\tau, \quad \tau^3+n\tau^2+pq\tau=cpq.
\]
\end{lemma}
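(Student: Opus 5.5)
The plan is to write $B=B(K(n,p,q))$ explicitly in block form and read off its rank from the row structure. Order the vertices as the $p$-part, the $q$-part, and the universal part of size $c$. In $K(n,p,q)=K_c\vee(K_p\cup K_q)$ each part induces a clique. Vertices of the $p$-part and $q$-part are mutually nonadjacent, and every other pair is adjacent. Adding $I_n$ therefore gives
\[
 B=\begin{pmatrix} J_{p} & 0 & J_{p,c}\\ 0 & J_{q} & J_{q,c}\\ J_{c,p} & J_{c,q} & J_{c}\end{pmatrix}.
\]
Within each part all rows coincide, so $B$ has at most three distinct rows. The three row types $(1,0,1)$, $(0,1,1)$, $(1,1,1)$, read blockwise, are linearly independent, hence $\rank B=3$. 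The nonzero spectrum of $B$ lives on the space of vectors that are constant on each part. On that space $B$ acts via the $3\times3$ quotient matrix
\[
 Q=\begin{pmatrix} p&0&c\\0&q&c\\p&q&c\end{pmatrix},
\]
and the nonzero eigenvalues of $B$ are exactly those of $Q$.

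Next I would compute $\det(tI-Q)$ directly. The trace is $p+q+c=n$. The sum of principal $2\times2$ minors is $pq+(pc-pc)+(qc-qc)=pq$. The determinant expands as $p(qc-qc)-0+c(0-pq)=-cpq$. So the characteristic polynomial is $f(t)=t^3-nt^2+pqt+cpq$, as claimed.

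For the signs of the roots: the product of the roots is $-cpq<0$, and all roots are real because $Q$ is similar to a symmetric matrix (it is the quotient of an equitable partition). Hence either exactly one root or all three roots are negative. Three negative roots would contradict the trace $n>0$. So there are two positive roots $\mu_1\ge\mu_2>0$ and one negative root $-\tau$. The remaining $n-3$ eigenvalues of $B$ are zero. Therefore $\lambda_1(B)+\lambda_2(B)=\mu_1+\mu_2=n-(-\tau)=n+\tau$, and by $S_2(B(G))=S_2(G)+2$ we get $S_2(K(n,p,q))=n-2+\tau$. Finally, substituting $t=-\tau$ into $f(t)=0$ gives $-\tau^3-n\tau^2-pq\tau+cpq=0$, which is the stated cubic for $\tau$.

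The only point needing care is justifying that the quotient eigenvalues are real and exhaust the nonzero spectrum. If one prefers to avoid the equitable-partition fact, it also follows from $B$ being symmetric with rank $3$ and its column space equal to the span of the three part-indicator vectors, which is $Q$-invariant. Everything else is routine expansion.
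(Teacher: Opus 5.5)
Your proposal is correct and follows essentially the same route as the paper: the equitable three-part quotient $Q$, its characteristic polynomial $t^3-nt^2+pqt+cpq$, and the sign count from $\det Q=-cpq<0$ together with $\tr Q=n>0$. The only cosmetic difference is that you obtain rank three from the linear independence of the three row types, whereas the paper obtains it from $\det Q\neq0$.
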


\begin{proof}
The partition into the three parts of sizes $p,q,c$ is equitable for $B(K(n,p,q))$, with quotient matrix
\[
 Q= \begin{pmatrix} p&0&c\\ 0&q&c\\ p&q&c \end{pmatrix}.
\]
A calculation gives $\det(tI_3-Q)=t^3-nt^2+pqt+cpq$ and $\det Q=-pqc\neq0$.
Since $B(K(n,p,q))$ has three row types, its rank is at most $3$. Because $\det Q\neq0$, its rank is exactly $3$, and its nonzero eigenvalues are precisely those of $Q$.

These eigenvalues are real, have product $\det Q=-pqc<0$, and sum $\tr Q=n>0$. Hence two are positive and one is negative; write the latter as $-\tau$ ($\tau>0$). The two positive eigenvalues are the two largest eigenvalues of $B(K(n,p,q))$, so
\[
 S_2(K(n,p,q)) = S_2(B(K(n,p,q))) - 2 = n-2+\tau.
\]
Substitution of $t=-\tau$ into $t^3-nt^2+pqt+cpq=0$  yields $\tau^3+n\tau^2+pq\tau=cpq$.
\end{proof}

We next determine the exact integer parameters $(p,q)$ that maximize this spectral sum.
\begin{proposition}\label{prop:integer-opt}
For every $n\geq5$, the maximum of $S_2(K(n,p,q))$ over integers $p\geq q\geq0$ with $p+q\leq n$ is attained uniquely at
\[
 (p,q)=\left(\left\lceil\frac{s_n}{2}\right\rceil, \left\lfloor\frac{s_n}{2}\right\rfloor\right).
\]
\end{proposition}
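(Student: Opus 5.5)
By Lemma~\ref{lem:candidate-poly}, whenever $p,q,c\ge1$ we have $S_2(K(n,p,q))=n-2+\tau$, where $\tau>0$ is the unique positive root of
\[
 f_{m,c}(\tau)=\tau^3+n\tau^2+m\tau-cm,\qquad m=pq .
\]
The plan is therefore to maximize $\tau$ over the admissible pairs $(p,q)$.

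\textbf{Degenerate cases.} If $q=0$, or if $c=n-p-q=0$, then $K(n,p,q)$ is $K_n$ or $K_p\cup K_q$. In either case $S_2\le n-2$: for $K_n$ this is $(n-1)+(-1)$, and for $K_p\cup K_q$ it is $(p-1)+(q-1)$. All nondegenerate candidates have $\tau>0$, so they beat these cases, and we may assume $p,q,c\ge1$.

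\textbf{Reduction to the balanced split.} Fix $s=p+q$, so $c=n-s$ is fixed. Then $f_{m,c}(\tau)=\tau^3+n\tau^2+m(\tau-c)$. At the root we have $\tau<c$, since $\tau^3+n\tau^2=m(c-\tau)>0$. Hence $\partial f/\partial m=\tau-c<0$ there, while $f$ is increasing in $\tau$ for $\tau>0$. By the implicit function theorem $\tau$ is strictly increasing in $m$. For fixed $s$, $m=pq$ is uniquely maximized by $p=\lceil s/2\rceil$, $q=\lfloor s/2\rfloor$ with $p\ge q$. So it remains to maximize over $s$ the root $\tau(s)$ of
\[
 g_s(\tau)=\tau^3+n\tau^2+\lfloor s^2/4\rfloor(\tau-n+s)=0 .
\]

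\textbf{One-variable integer optimization.} Comparing two values $s,s'$ reduces to a sign test. Both $g_s$ and $g_{s'}$ are increasing in $\tau$ on $(0,\infty)$, so $\tau(s')<\tau(s)$ if and only if $g_{s'}(\tau(s))<0$. Eliminating $\tau^3+n\tau^2$ gives
\[
 g_{s'}(\tau)=(\tau-c')m'-(\tau-c)m .
\]
Hence the sign is explicit once $\tau(s)$ is located accurately enough.

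For the heuristic, use the continuous relaxation $m=s^2/4$ and write $s=\sigma n$, $\tau=\theta n$. To leading order $\theta^3+\theta^2+\tfrac{\sigma^2}{4}(\theta-1+\sigma)=0$. Maximizing $\theta$ should give $\sigma=4/7$, consistent with $\tau\approx n/7$ and $S_2\approx 8n/7-2$ (when $n=7k$ and $p=q=2k$, the value $\tau=k$ is an exact root). For example, $k^3+7k^3+4k^2(k-3k)=0$.

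The rigorous argument proceeds in steps.
\begin{enumerate}
 \item Show that $\tau(s)$ is unimodal in $s$, for instance by showing that the continuous optimizer is unique and that $g$ is well behaved, or by using the discrete comparison above for $s'=s\pm1$.
 \item Locate the discrete maximizer by comparing $s_n$ with $s_n\pm1$, and where needed $s_n\pm2$ (the floor makes parity matter), via sign checks of $g_{s'}(\tau(s_n))$. This should be done separately for each residue $r=n\bmod 7$, writing $n=7k+r$ and bracketing $\tau(s_n)$ by explicit rational functions of $k$. For example, $\tau\in(k+\alpha_r-\epsilon,\,k+\alpha_r+\epsilon)$, obtained by evaluating $g_{s_n}$ at the bracket endpoints, which yields polynomial inequalities in $k$.
 \item Check small $n$ (for example $5\le n\le 20$) directly by computation to cover the ranges where the asymptotic brackets are not yet valid.
\end{enumerate}

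\textbf{Main obstacle.} The hard part is this residue-by-residue comparison. The quantities $g_{s'}(\tau(s_n))$ are $O(n^2)$ perturbations of values near zero, so the gaps between neighbouring $s$ are only $O(1)$ in $\tau$. The brackets on $\tau(s_n)$ must be sharp to $o(1)$, probably to order $1/k$, for the strict inequalities, and hence uniqueness, to follow. I would first derive a second-order expansion $\tau(s)=k+\alpha+\beta/k+O(k^{-2})$ for each candidate $s$ and compare the constants $\alpha$. Where the $\alpha$ values tie, I would compare the $\beta$ terms, and convert these into exact polynomial sign statements valid for all $k\ge k_0$.
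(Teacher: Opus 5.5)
Your handling of the degenerate pairs, and your reduction to the balanced split $p=\lceil s/2\rceil$, $q=\lfloor s/2\rfloor$ for fixed $s$, are correct and match the paper. The gap is in the optimization over $s$, which is the real content of the proposition; there you give only an outline. Unimodality of $s\mapsto\tau(s)$ is stated as a goal but not proved. Uniqueness of the continuous optimizer $\sigma=4/7$ does not supply it. For even $s$, homogeneity gives exactly $\tau(s)=n\theta(s/n)$, and $\theta'(s_n/n)=O(1/n)$, so near the peak consecutive values differ only by $O(1/n)$. The parity perturbation from $\lfloor s^2/4\rfloor$ is of the same order. As far as your argument shows, even and odd $s$ could therefore interleave non-monotonically. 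Without global unimodality, comparing $s_n$ with $s_n\pm1$ and $s_n\pm2$ proves only a local maximum. In addition, you never produce the second-order expansions, the explicit threshold $k_0$ where your brackets become valid, or hence the range of the finite check. So uniqueness for all $n\geq5$ is not established. There is also a sign slip: $g_{s'}$ is increasing with root $\tau(s')$, so $g_{s'}(\tau(s))<0$ means $\tau(s')>\tau(s)$, not $\tau(s')<\tau(s)$.

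The missing idea is already in your identity $g_{s'}(\tau)=(\tau-c')m'-(\tau-c)m$, once you specialize it to $s'=s+1$ as you briefly suggest. Then $c'=c-1$ and $m'-m=\lfloor(s+1)/2\rfloor$, so the identity collapses to $g_{s+1}(\tau_s)=\lfloor(s+1)/2\rfloor(\tau_s-L_s)$, where $L_s=n-s-\lfloor s/2\rfloor-1$ is an integer. Hence $\tau_{s+1}>\tau_s$ if and only if $\tau_s<L_s$. This fails automatically when $L_s\leq0$, and when $L_s>0$ it is equivalent to $g_s(L_s)>0$. For $s\in\{2h,2h+1\}$ one computes exactly $g_s(L_s)=(n-2h-1)R_s$, with $R_{2h}=14h^2-11hn+7h+2n^2-3n+1$ and $R_{2h+1}=14h^2-11hn+21h+2n^2-8n+8$. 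Moreover, $R_s$ strictly decreases in $s$ while $L_s>0$: the increments $14h-5n+7$ and $14h-6n+14$ are negative under $n\geq3h+2$ and $n\geq3h+3$, respectively. This gives unimodality for every $n$ at once. A residue table showing $R_{s_n-1}>0>R_{s_n}$, with entries linear in $k$, then identifies $s_n$ as the unique maximizer. That is the paper's proof, and it needs no asymptotics, no $1/k$ brackets, and no computer check.
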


\begin{proof}
If $q=0$ or $p+q=n$, then $S_2(K(n,p,q))=n-2$. By \eqref{eq:sn-table}, all three parts of $K_n^\star$ are nonempty, so Lemma~\ref{lem:candidate-poly} gives $S_2(K_n^\star)>n-2$. Hence no boundary pair can maximize, and we restrict to $p\geq q>0$ with $p+q<n$.

Fix $s=p+q$. By Lemma~\ref{lem:candidate-poly},
$S_2(K(n,p,q))=n-2+\tau$, where
$\tau^3+n\tau^2=pq(n-s-\tau)$, implying $\tau<n-s$.
For two feasible pairs with sum $s$, let $a=pq<a'=p'q'$, and let
$\tau$ and $\tau'$ be the corresponding positive roots.  At
$t=\tau$, the polynomial corresponding to $a'$ has value
\[
 \tau^3+n\tau^2+a'\tau-(n-s)a'
 =-(a'-a)(n-s-\tau)<0.
\]
This polynomial is strictly increasing on $[0,\infty)$ and vanishes
at $\tau'$, so $\tau'>\tau$.  Thus $S_2$ increases strictly with
$pq$, uniquely maximizing at
$(p,q)=(\lceil s/2\rceil,\lfloor s/2\rfloor)$.
It remains to maximize the corresponding positive root $\tau_s$ of
\begin{equation}\label{eq:balanced-root}
 t^3+nt^2+\left\lfloor\frac{s^2}{4}\right\rfloor t -(n-s)\left\lfloor\frac{s^2}{4}\right\rfloor=0
\end{equation}
for $2\leq s\leq n-1$. Set $L_s=n-s-\lfloor s/2\rfloor-1$. Evaluating the left-hand side of \eqref{eq:balanced-root} for $s+1$ at $t=\tau_s$ yields $\lfloor(s+1)/2\rfloor(\tau_s-L_s)$. Since the polynomials are strictly increasing, $\tau_{s+1}>\tau_s$ if and only if $\tau_s<L_s$, and $\tau_{s+1}<\tau_s$ if and only if $\tau_s>L_s$.
When $L_s>0$, write $s=2h$ or $2h+1$. Substituting $t=L_s$ into \eqref{eq:balanced-root} gives $(n-2h-1)R_s$, where
\[
 R_{2h}=14h^2-11hn+7h+2n^2-3n+1, \quad R_{2h+1}=14h^2-11hn+21h+2n^2-8n+8.
\]
Since $n-2h-1>0$, we obtain $R_s>0$ exactly when
$\tau_s<L_s$, and $R_s<0$ exactly when $\tau_s>L_s$.
If $L_{2h}>0$, then $n\geq3h+2$, and hence $
 R_{2h+1}-R_{2h} =14h-5n+7<0.$
If $L_{2h+1}>0$, then $n\geq3h+3$, and hence  $
 R_{2h+2}-R_{2h+1}=14h-6n+14<0.$
Thus $R_s$ strictly decreases whenever $L_s>0$.
For $n=7k+r$, direct substitution gives
\[
\begin{array}{c|ccccccc}
 r & 0 & 1 & 2 & 3 & 4 & 5 & 6 \\ \hline
 R_{s_n-1} & 7k+1 & 13k+6 & 5k+3 & 4k+2 & 10k+8 & 2k+2 & k+1 \\
 R_{s_n} & 1-7k & -k & -2k & -10k-2 & -4k-2 & -5k-2 & -13k-7
\end{array}
\]
Moreover, \eqref{eq:sn-table} gives
$L_{s_n-1}>0$, and the displayed table shows that
$R_{s_n-1}>0>R_{s_n}$ for every $n\geq5$.
If $2\leq s<s_n$, $L_s$ decreases strictly, so $L_s>0$. Monotonicity gives $R_s\geq R_{s_n-1}>0$, which forces $\tau_{s+1}>\tau_s$.
If $s_n\leq s\leq n-2$, we consider two cases: if $L_s>0$, then $R_s\leq R_{s_n}<0$, yielding $\tau_s>L_s$; if $L_s\leq0$, then $\tau_s>0\geq L_s$. Both cases ensure $\tau_s>L_s$, giving $\tau_{s+1}<\tau_s$.
Therefore, $\tau_s$ is uniquely maximized at $s=s_n$.
\end{proof}

Let $-\tau_n$ be the negative eigenvalue of $B(K_n^\star)$.  Then
\begin{equation}\label{eq:candidate-value}
 S_2(K_n^\star)=n-2+\tau_n.
\end{equation}
Having defined the maximal candidate value in terms of $\tau_n$, we now establish sharp analytical bounds on this parameter.
\begin{lemma}
For every $n\geq5$, we have
\begin{equation}\label{eq:tau-envelope}
 0<\tau_n\leq\frac n7.
\end{equation}
The upper equality holds if and only if $7\mid n$. Moreover, for $f_n(t)=\frac34t^2+(t-n)^2$,
\begin{equation}\label{eq:candidate-deficit}
 0\leq n^2-f_n(n+\tau_n)<\frac12.
\end{equation}
\end{lemma}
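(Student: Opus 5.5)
The plan is to reduce everything to the cubic from Lemma~\ref{lem:candidate-poly}. Write $s=s_n$, $a=pq=\lfloor s^2/4\rfloor$ and $c=n-s$, and set
\[
 g(t)=t^3+nt^2+at-ca .
\]
By Lemma~\ref{lem:candidate-poly} and Proposition~\ref{prop:integer-opt}, $\tau_n$ is the unique positive root of $g$. Positivity of $\tau_n$ is immediate, since $g(0)=-ca<0$ (all three parts of $K_n^\star$ are nonempty). Moreover $g$ is strictly increasing and convex on $[0,\infty)$.

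For the upper bound I would evaluate $g$ at $n/7$. A direct computation gives
\[
 g(n/7)=\frac{8n^3}{343}-a\Bigl(\frac{6n}{7}-s\Bigr).
\]
From \eqref{eq:sn-table} we have $s<6n/7$. Hence $a(6n/7-s)\le h(s)$, where $h(x)=\frac{x^2}{4}\bigl(\frac{6n}{7}-x\bigr)$. Elementary calculus shows that on $[0,6n/7]$ the function $h$ is uniquely maximized at $x=4n/7$, with $h(4n/7)=8n^3/343$. Therefore $g(n/7)\ge0$, and monotonicity gives $\tau_n\le n/7$. Equality requires $s=4n/7$, which forces $7\mid n$. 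Conversely, if $n=7k$ then $s_n=4k$ is even, so $a=s^2/4$ and equality holds throughout.

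For \eqref{eq:candidate-deficit}, expand
\[
 n^2-f_n(n+\tau)=\tfrac14(n-7\tau)(n+\tau).
\]
The left inequality is then exactly $\tau_n\le n/7$. For the right inequality we need $(n-7\tau_n)(n+\tau_n)<2$. By convexity of $g$,
\[
 g(n/7)\ge g(\tau_n)+g'(\tau_n)\Bigl(\frac n7-\tau_n\Bigr)=g'(\tau_n)\Bigl(\frac n7-\tau_n\Bigr),
\]
and hence
\[
 (n-7\tau_n)(n+\tau_n)\le \frac{7\,g(n/7)\,(n+\tau_n)}{g'(\tau_n)}\le\frac{8n\,g(n/7)}{g'(\tau_n)}.
\]
Both pieces can be estimated separately. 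In the numerator, $8n^3/343-h(s)$ is a second-order deviation, because $s_n-4n/7=O(1)$; adding the floor loss $h(s)-a(6n/7-s)\le\frac14(6n/7-s)$, one gets $g(n/7)=O(n)$ with an explicit coefficient for each residue $r$. In the denominator, $g'(\tau_n)\ge 2n\tau_n+a$, and I would use a crude lower bound such as $\tau_n\ge n/7-1$.

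The main obstacle is turning this into the constant $2$ uniformly. I would write $n=7k+r$ and compute $g(n/7)$ exactly as a polynomial in $k$ for each of the seven residues; it should come out as $\alpha_r k+\beta_r$. I would also bound $g'(\tau_n)\ge 2n(n/7-1)+a$, which is about $(2/7+4/49)n^2$. The resulting inequality is then a polynomial inequality in $k$ for each residue, to be checked directly.

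If the convexity bound turns out too lossy for some residue, I would sharpen it by a second-order Taylor step. Alternatively, I would compare $g$ at $n/7-\delta_r$ with $\delta_r=2/(8n/7)$ and show $g(n/7-\delta_r)<0$. The finitely many small $n$ not covered by the asymptotic estimate would be verified numerically.
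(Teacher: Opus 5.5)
Your route works in outline but differs from the paper's in both halves.

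\textbf{The envelope $\tau_n\le n/7$.} The paper tabulates $343F(n/7)$ residue by residue. Your identity
\[
g\Bigl(\frac n7\Bigr)=\frac{8n^3}{343}-a\Bigl(\frac{6n}{7}-s\Bigr),
\]
combined with $a\le s^2/4$ and the fact that $\frac{x^2}{4}\bigl(\frac{6n}{7}-x\bigr)$ attains its maximum $\frac{8n^3}{343}$ on $[0,6n/7]$ only at $x=4n/7$, is cleaner. It needs no table and explains where the constant $8/7$ comes from. It also gives the equality case $7\mid n$ immediately. In fact it shows $\tau\le n/7$ for every member of $K(n,p,q)$, not just $K_n^\star$.

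\textbf{The deficit.} The paper does in one step what your fallback intends. It reads off $F(n/7)\le\frac{26n+12}{343}$ from the table. A third-order Taylor expansion then shows $F\bigl(\frac n7-\frac1{4n}\bigr)<0$ for all $n\ge5$. Hence $\frac n7-\tau_n<\frac1{4n}$, and the deficit is below $\frac74\cdot\frac{8n}{7}\cdot\frac1{4n}=\frac12$, with no small cases left over.

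Two points in your version need repair.

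\textbf{The fallback shift is off by a factor of $7$.} The quantity $2/(8n/7)=7/(4n)$ is the required bound on $n-7\tau_n$, not on $\frac n7-\tau_n$. Showing $g\bigl(\frac n7-\frac7{4n}\bigr)<0$ would only give $(n-7\tau_n)(n+\tau_n)<14$. The correct shift is $\frac1{4n}$, which is exactly the paper's choice.

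\textbf{The lower bound $\tau_n\ge\frac n7-1$ is not free.} In the tangent-line route it needs its own evaluation of $g$ below $n/7$, for instance
\[
g\Bigl(\frac n7-1\Bigr)=g\Bigl(\frac n7\Bigr)-\frac{17n^2}{49}-a+\frac{10n}{7}-1<0 .
\]
Even then, with this crude bound and the $3\tau_n^2$ term dropped, the per-residue inequality
\[
4n\,g(n/7)<2n\Bigl(\frac n7-1\Bigr)+a
\]
fails for fourteen small values of $n$ in residues $1,2,5,6$, up to $n=30$. Each of these would then need separate verification. The asymptotic margin, roughly $\frac{104}{343}n^2$ against $\frac{18}{49}n^2$, guarantees the exceptional list is finite, so the route does go through. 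But it is a two-step bootstrap that the direct evaluation at $\frac n7-\frac1{4n}$ avoids entirely.
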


\begin{proof}
Set $m=\left\lfloor\frac{s_n^2}{4}\right\rfloor$ and $F(t)=t^3+nt^2+mt-(n-s_n)m$. By Lemma~\ref{lem:candidate-poly}, $F(\tau_n)=0$, and $F$ is strictly increasing on $[0,\infty)$. Writing $n=7k+r$ and using \eqref{eq:sn-table} gives
\[
\begin{array}{c|ccccccc}
 r & 0 & 1 & 2 & 3 & 4 & 5 & 6 \\ \hline
 343F(n/7) & 0 & 168k+8 & 182k+64 & 42k+20 & 42k+22 & 182k+118 & 168k+160
\end{array}
\]
Thus $F(n/7)\geq0$, with equality exactly when $r=0$. Since $F(\tau_n)=0$ and $F$ is strictly increasing, this proves \eqref{eq:tau-envelope} and its equality statement.
The same table gives $F(n/7)\leq\frac{26n+12}{343}$. Expanding at $n/7$ and using the preceding bound gives
\begin{align*}
 F\left(\frac n7-\frac1{4n}\right) &= F\left(\frac n7\right) - \frac1{4n}\left(\frac{17n^2}{49}+m\right) + \frac5{56n} - \frac1{64n^3} < 0.
\end{align*}
Since $F$ is strictly increasing and $F(\tau_n)=0$, we obtain $0\leq \frac n7-\tau_n < \frac1{4n}$.
Finally, \eqref{eq:tau-envelope} gives
\[
 0\leq n^2-f_n(n+\tau_n) = \frac74(n+\tau_n)\left(\frac n7-\tau_n\right) < \frac12.
\]
This proves \eqref{eq:candidate-deficit}.
\end{proof}

We conclude our analysis of the candidate family by showing that the spectral sum
of $K_n^\star$ strictly increases with its order.
\begin{lemma}
\label{lem:strict-growth}
For every $n\geq6$, $S_2(K_n^\star)>S_2(K_{n-1}^\star).$
\end{lemma}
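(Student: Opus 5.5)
We need $n-2+\tau_n > n-3+\tau_{n-1}$, i.e. $\tau_n>\tau_{n-1}-1$. Since $\tau_{n-1}\leq (n-1)/7$ by \eqref{eq:tau-envelope} (applied at order $n-1\geq5$), it suffices to show $\tau_n>\frac{n-1}{7}-1=\frac{n-8}{7}$. In the proof of \eqref{eq:candidate-deficit} we showed $\frac n7-\tau_n<\frac1{4n}$. Therefore
\[
 \tau_n>\frac n7-\frac1{4n}>\frac n7-1>\frac{n-8}{7}
\]
for every $n\geq5$, which gives $S_2(K_n^\star)-S_2(K_{n-1}^\star)=1+\tau_n-\tau_{n-1}>0$.

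An alternative argument avoids the estimates altogether and uses only monotonicity. Write $s=s_{n-1}$ and note that $K(n,\lceil s/2\rceil,\lfloor s/2\rfloor)$ is obtained from $K_{n-1}^\star$ by adding one vertex to the universal part. Its $\tau$ satisfies $\tau^3+n\tau^2+m\tau=(n-s)m$, where $m=\lfloor s^2/4\rfloor$. The old root $\tau_{n-1}$ satisfies $\tau^3+(n-1)\tau^2+m\tau=(n-1-s)m$. Evaluating the new polynomial at $\tau_{n-1}-1$ shows that the new $\tau$ exceeds $\tau_{n-1}-1$, so $S_2$ of this graph exceeds $S_2(K_{n-1}^\star)$. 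Proposition~\ref{prop:integer-opt} then gives $S_2(K_n^\star)\geq$ this value.

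The analytic bound is the cleaner route, and its only subtle point is confirming that the estimate $\frac n7-\tau_n<\frac1{4n}$ was established for all $n\geq5$, so that it applies at order $n$. The estimate $\tau_{n-1}\leq (n-1)/7$ is needed at order $n-1\geq5$, and this is where the hypothesis $n\geq6$ is used.
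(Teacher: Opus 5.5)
Your main argument is correct, but it takes a different route from the paper. The paper argues structurally. By the table for $s_n$, passing from $K_{n-1}^\star$ to $K_n^\star$ enlarges exactly one of the three parts by one vertex, so $B(K_{n-1}^\star)$ is a proper principal submatrix of the irreducible nonnegative matrix $B(K_n^\star)$. Perron--Frobenius then makes $\lambda_1$ strictly larger, and Cauchy interlacing gives $\lambda_2(B(K_n^\star))\geq\lambda_2(B(K_{n-1}^\star))$.

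You instead read the conclusion off the two-sided envelope $\frac n7-\frac1{4n}<\tau_n\leq\frac n7$ from the preceding lemma. This is legitimate: that lemma uses only Lemma~\ref{lem:candidate-poly} and the $F(n/7)$ table, so there is no circularity. You also do not need the intermediate estimate from its proof, since the stated bound \eqref{eq:candidate-deficit} already gives $\frac n7-\tau_n<\frac{2}{7(n+\tau_n)}<\frac{2}{7n}$.

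Your route is shorter and quantitative. It actually yields $\tau_n-\tau_{n-1}>\frac17-\frac1{4n}>0$, hence $S_2(K_n^\star)-S_2(K_{n-1}^\star)>\frac87-\frac1{4n}$, whereas the paper's argument only gives strict positivity. In exchange, the paper's route does not depend on the delicate residue-by-residue numerics. It rests on a general principle (adding a vertex so that the result is connected strictly increases $S_2$) and needs only the nesting of consecutive candidates.

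Your alternative sketch is closer to that idea, and the evaluation you omitted does work. With $\tau=\tau_{n-1}$ and $m=\lfloor s_{n-1}^2/4\rfloor$, the new cubic at $\tau-1$ equals $-2\tau^2+(3-2n)\tau+n-1-2m$. When $\tau\geq1$ this is at most $-n-2m<0$, and when $\tau<1$ the claim is trivial.
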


\begin{proof}
By \eqref{eq:sn-table} and \eqref{eq:extremal-candidate}, passing from
$K_{n-1}^\star$ to $K_n^\star$ increases the universal part by one
when $n\equiv0,1,4\pmod7$, the $p$-part by one when
$n\equiv2,5\pmod7$, and the $q$-part by one when
$n\equiv3,6\pmod7$; the other two parts remain unchanged.
Consequently, $K_{n-1}^\star$ is an induced subgraph of
$K_n^\star$, and $B(K_{n-1}^\star)$ is a proper principal submatrix of
$B(K_n^\star)$.
The universal part of $K_n^\star$ has size $n-s_n>0$, so $K_n^\star$ is
connected and $B(K_n^\star)$ is irreducible and nonnegative.
Perron--Frobenius and Cauchy interlacing give
\[
 \lambda_1(B(K_n^\star))
 >\lambda_1(B(K_{n-1}^\star))
 \geq\lambda_2(B(K_n^\star))
 \geq\lambda_2(B(K_{n-1}^\star)).
\]
Adding the first and last comparisons and using
$S_2(B(G))=S_2(G)+2$ proves the desired result.
\end{proof}

The preceding lemmas identify the optimal candidate in the family
$K(n,p,q)$.  We now turn to the structural reduction that explains why
this family is extremal.  The natural parameter in this reduction is
the incidence rank of the complement.
For a bipartite graph $H$ with bipartition $U\cup V$, its
\emph{bipartite incidence matrix} is the $0$--$1$ matrix whose rows
are indexed by $U$, whose columns are indexed by $V$, and whose
$(u,v)$-entry is one if and only if $uv\in E(H)$. The rank of this matrix over $\mathbb R$ is called the
\emph{incidence rank} of $H$.
The bipartite graph $H$ is a \emph{chain graph} if the vertices of $U$
can be ordered as $u_1,\ldots,u_p$ so that
\[
 N_H(u_1)\subseteq N_H(u_2)\subseteq\cdots\subseteq N_H(u_p).
\]
Equivalently, the vertices in the two bipartition classes can be
ordered so that every row of the bipartite incidence matrix consists
of an initial block of ones followed by zeros, with the block lengths
nondecreasing down the rows.  Such a matrix is called a
\emph{Ferrers matrix}.  We use this ordering throughout.
The following lemma provides a sufficient algebraic condition for a graph to have a chain graph as its complement.
\begin{lemma}\label{lem:threshold-complement}
Let $B$ be a symmetric $0$--$1$ matrix with diagonal entries one.
Suppose that there exist vectors $\mathbf x>0$ and $\mathbf y$ such
that, for $u\ne v$, $B_{uv}=1$ exactly when $
 x_ux_v+y_uy_v\geq0.$
Then the graph with adjacency matrix $J-B$ is a chain graph.
\end{lemma}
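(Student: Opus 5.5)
Since $\mathbf x>0$, I will normalize by setting $z_u=y_u/x_u$ for every vertex $u$. For $u\neq v$ the condition $x_ux_v+y_uy_v\geq0$ is equivalent, after dividing by $x_ux_v>0$, to $1+z_uz_v\geq0$, that is, $z_uz_v\geq-1$. So the graph $H$ with adjacency matrix $J-B$ (zero diagonal, since $B_{uu}=1$) has $uv\in E(H)$ exactly when $z_uz_v<-1$. The whole argument will use only this reformulation.

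First I check that $H$ is bipartite. Put $U=\{u:z_u>0\}$ and $V=\{v:z_v\le 0\}$. If $z_uz_v<-1$, then $z_u$ and $z_v$ are nonzero and of opposite signs, so every edge of $H$ joins $U$ to $V$. In particular, vertices with $z=0$ are isolated, and putting them in $V$ (or in either class) is harmless.

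Next I show the nested-neighbourhood property. Order $U$ so that $z_{u_1}\le z_{u_2}\le\cdots\le z_{u_p}$. For $u\in U$,
\[
N_H(u)=\{v\in V: z_v<-1/z_u\}.
\]
As $z_u$ increases, the threshold $-1/z_u$ increases, since it is a negative number moving toward $0$. Hence the sets $\{v: z_v<-1/z_u\}$ increase, which gives $N_H(u_1)\subseteq\cdots\subseteq N_H(u_p)$. This is exactly the chain-graph condition.

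I expect no real obstacle here. The one point needing care is to define the bipartition explicitly, including the vertices with $z_u=0$, and to note that the equivalence uses strict positivity of $\mathbf x$.
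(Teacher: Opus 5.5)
Your proof is correct and follows the paper's argument. Both normalize by $z_u=y_u/x_u$ using $\mathbf x>0$, note that edges of $J-B$ join vertices whose $z$-values are nonzero and of opposite sign, and order one side by $z_u$ to obtain nested neighbourhoods. The only difference is cosmetic: you write $N_H(u)=\{v: z_v<-1/z_u\}$ as an explicit threshold set, whereas the paper compares two same-sign vertices directly via $r_vr_w\leq r_ur_w<-1$.
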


\begin{proof}
Put $r_u=y_u/x_u$.  Since $x_u>0$, for $u\ne v$,
$B_{uv}=0$ exactly when $1+r_ur_v<0.$
Thus every edge of the graph with adjacency matrix $J-B$ joins a
vertex with positive $r_u$ to one with negative $r_u$, while vertices
with $r_u=0$ are isolated.  If $r_u,r_v$ have the same nonzero sign,
$|r_u|\leq|r_v|$, and $w$ is adjacent to $u$, then $r_w$ has the
opposite sign and $r_vr_w\leq r_ur_w<-1.$
Hence every neighbor of $u$ is also a neighbor of $v$.  Ordering the
vertices on each side by $|r_u|$ therefore orders their neighborhoods
by inclusion, so the graph is a chain graph.
\end{proof}

We now apply this threshold condition to establish the main structural reduction for an edge-maximal extremal graph.
\begin{proposition}\label{prop:ferrers-reduction}
Let $G$ be an $n$-vertex graph maximizing $S_2(G)$, chosen among all
such graphs to have the maximum number of edges.  Then $G$ is
connected and $\overline G$ is a chain graph.
\end{proposition}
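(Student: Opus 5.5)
We work with $B=B(G)=A(G)+I$, so that $S_2(B)=S_2(G)+2$, and use Lemma~\ref{lem:ky-fan} to compare $G$ with graphs obtained by adding or deleting one edge. First we show that $G$ is connected. If $G$ were disconnected with components $G_1,\dots,G_m$, then $\lambda_1(G)+\lambda_2(G)$ would be realized either inside one component or as the sum of the spectral radii of two components. In either case, adding an edge between two components does not decrease $S_2$. To see this, take orthonormal eigenvectors $\mathbf x,\mathbf y$ for $\lambda_1,\lambda_2$ of $B$. We may take $\mathbf x$ to be a nonnegative Perron vector supported on a component $C_1$, and $\mathbf y$ either supported on $C_1$ or a nonnegative Perron vector of another component $C_2$. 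Adding an edge $uv$ changes the Ky Fan value by $2(x_ux_v+y_uy_v)$. If $u\in C_1$ and $v\in C_2$, then $x_v=0$ if $\mathbf y$ is supported on $C_1$, and $y_u=0$ if $\mathbf y$ lives on $C_2$. Hence the change is $0$, and $S_2$ of the new graph is $\geq S_2(G)$. This new graph has more edges, contradicting edge-maximality. So $G$ is connected, $B$ is irreducible and nonnegative, and $\lambda_1(B)$ is simple with a Perron vector $\mathbf x>0$.

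Next, fix orthonormal eigenvectors $\mathbf x>0$ (for $\lambda_1(B)$) and $\mathbf y$ (for $\lambda_2(B)$). For a pair $u\ne v$, toggling the entry $B_{uv}$ changes $\mathbf x^{\mathsf T}B\mathbf x+\mathbf y^{\mathsf T}B\mathbf y$ by $\pm2(x_ux_v+y_uy_v)$. The plan is the threshold rule.
\begin{itemize}
\item If $uv\notin E(G)$ and $x_ux_v+y_uy_v\geq0$, adding $uv$ gives a graph $G'$ with $S_2(G')\geq S_2(G)$ by Lemma~\ref{lem:ky-fan}. So $G'$ is also a maximizer with more edges, which is a contradiction. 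Hence every non-edge has $x_ux_v+y_uy_v<0$.
\item If $uv\in E(G)$ and $x_ux_v+y_uy_v<0$, deleting $uv$ strictly increases the Ky Fan value of the same pair $\mathbf x,\mathbf y$. So $S_2(G-uv)>S_2(G)$, contradicting maximality. Hence every edge has $x_ux_v+y_uy_v\geq0$.
\end{itemize}
Thus, for $u\ne v$, $B_{uv}=1$ exactly when $x_ux_v+y_uy_v\geq0$. Lemma~\ref{lem:threshold-complement} then shows that $J-B=A(\overline G)$ is a chain graph.

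The main subtlety is the connectivity step, specifically choosing eigenvectors in the disconnected case so that the cross-component change vanishes. That step needs care when $\lambda_2$ is repeated or comes from the same component as $\lambda_1$. In every case one can choose eigenvectors supported on single components, since $B$ is block diagonal, and this makes the increment $x_ux_v+y_uy_v$ equal to zero for some cross pair. The nonstrict inequality then suffices for the edge-maximality contradiction. Connectivity is also needed to apply Lemma~\ref{lem:threshold-complement}, which requires $\mathbf x>0$ strictly.
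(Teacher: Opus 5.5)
Your proposal is correct and follows essentially the paper's argument. Ky Fan comparisons under single-edge additions and deletions give the threshold rule (strict for non-edges by edge-maximality, weak for edges by maximality). Choosing eigenvectors supported on single components produces a cross-component pair with $x_ux_v+y_uy_v=0$, whose addition contradicts edge-maximality. A positive Perron vector together with Lemma~\ref{lem:threshold-complement} then yields the chain-graph complement.

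The only difference is the order: you prove connectivity before deriving the threshold rule, whereas the paper derives the rule first for arbitrary eigenvector pairs and deduces connectivity from it. This difference is immaterial.
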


\begin{proof}
Let $\mathbf x,\mathbf y$ be any orthonormal eigenvectors of $A(G)$
corresponding to $\lambda_1(G)$ and $\lambda_2(G)$, with vertex
coordinates $x_u,y_u$.

If $uv\notin E(G)$, Lemma~\ref{lem:ky-fan} gives
$S_2(G+uv)\geq S_2(G)+2(x_ux_v+y_uy_v)$.  Hence
$x_ux_v+y_uy_v\leq0$.  Equality would produce a maximizer with more
edges, contrary to the choice of $G$, so the inequality is strict.
Similarly, deleting an edge $uv$ gives
$S_2(G-uv)\geq S_2(G)-2(x_ux_v+y_uy_v)$, and hence
$x_ux_v+y_uy_v\geq0$.  Thus $uv\in E(G)$ exactly when
$x_ux_v+y_uy_v\geq0$.

Suppose that $G$ is disconnected.  Since $A(G)$ is block diagonal and
the preceding argument applies to any such pair of eigenvectors, we
may choose each eigenvector to be supported on one component.  If
their supporting components are distinct, choose one vertex from
each; otherwise choose one vertex inside their common supporting
component and one outside it.  In either case, we obtain vertices
$u,v$ in distinct components with $x_ux_v+y_uy_v=0$.  The preceding
criterion would make $uv$ an edge, a contradiction.  Hence $G$ is
connected.
By the Perron--Frobenius theorem, we may now choose $\mathbf x>0$.
The same criterion and Lemma~\ref{lem:threshold-complement}, applied
to $B(G)$, show that $\overline G$ is a chain graph.
\end{proof}

We now organize the argument by the incidence rank of $\overline G$.
Rank zero means that $\overline G$ is empty, so $G=K_n=K(n,0,0)$.
If the incidence rank is one, all nonzero rows of the bipartite
incidence matrix are equal: indeed, they are scalar multiples of one
another, and two nonzero $0$--$1$ rows can be scalar multiples only
with scalar one.  Hence $\overline G$ is a complete bipartite graph
together with isolated vertices, and therefore $ G\cong K(n,p,q)$
for some $p,q>0$ with $p+q\leq n$.  Proposition~\ref{prop:integer-opt}
therefore settles the cases of incidence rank at most one.  It remains
to exclude incidence rank at least two, which is the purpose of the
technical estimates developed in the next section.

\section{Technical estimates and proof of the main theorem}
\label{sec:technical}

The exclusion of chain complements of incidence rank at least two
rests on two estimates: a spectral-tail inequality for weighted
Ferrers quotients and a uniform numerical defect.  To prove the first,
we compress a chain graph to its nonzero neighborhood classes and use
the sign pattern of the inverse quotient to control coefficient sign
changes.

For a finite sequence $(a_0,\ldots,a_m)$ with no zero terms, let
$\operatorname{var}(a_0,\ldots,a_m)$ be the number of indices
$j\in\{1,\ldots,m\}$ such that $a_{j-1}a_j<0$.  These indices are
called the sign-change positions.
A real matrix is \emph{totally nonnegative} if all its minors are
nonnegative.  The following is the zero-free specialization of the
variation-diminishing theorem
\cite[Theorem~3.4(a)]{Pinkus2010}.

\begin{lemma}
\label{lem:variation-diminishing}
Let $M$ be totally nonnegative.  If $\mathbf x$ and $M\mathbf x$ have
no zero coordinates, then $
 \operatorname{var}(M\mathbf x)\leq\operatorname{var}(\mathbf x).$
\end{lemma}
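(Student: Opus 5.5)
Since Lemma~\ref{lem:variation-diminishing} is the zero-free specialization of the variation-diminishing theorem, the plan is to derive it from the classical fact that a totally nonnegative matrix cannot increase the number of strict sign changes. The standard route is to reduce to sign patterns and compound matrices.

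First, write $\operatorname{var}(\mathbf x)=k$. Since $\mathbf x$ has no zero coordinates, partition the indices $1,\ldots,n$ into $k+1$ consecutive blocks $I_1,\ldots,I_{k+1}$ on which $x$ has constant sign, with signs alternating between blocks. Then
\[
 M\mathbf x=\sum_{j=1}^{k+1}\varepsilon_j\,\mathbf m_j,
 \qquad \varepsilon_j=(-1)^{j-1}\varepsilon_1,
\]
where $\mathbf m_j=\sum_{i\in I_j}|x_i|\,M_{\cdot i}$. Thus $\mathbf m_j$ is a nonnegative combination of the columns of $M$ indexed by $I_j$. The matrix $\widetilde M=[\mathbf m_1,\ldots,\mathbf m_{k+1}]$ is again totally nonnegative. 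The reason is that each minor of $\widetilde M$ expands by multilinearity as a nonnegative combination of minors of $M$ with increasing column indices, because the blocks are consecutive and ordered. Moreover, $M\mathbf x=\widetilde M\mathbf c$ with $\mathbf c=(\varepsilon_1,\ldots,\varepsilon_{k+1})$ strictly alternating. This reduces the claim to the case of $k+1$ columns.

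Second, suppose for contradiction that $\mathbf z=\widetilde M\mathbf c$ has no zero entries and at least $k+1$ sign changes. Choose rows $r_1<\cdots<r_{k+2}$ with $z_{r_i}$ strictly alternating in sign. Form the $(k+2)\times(k+2)$ matrix $[\mathbf z_R\mid \widetilde M_R]$ obtained by restricting to these rows. Its determinant is zero, since $\mathbf z$ is a linear combination of the columns of $\widetilde M$. Expanding along the first column gives
\[
 0=\sum_{i=1}^{k+2}(-1)^{i+1}z_{r_i}\det \widetilde M_{R\setminus r_i}.
\]
The signs $(-1)^{i+1}z_{r_i}$ are all equal, and every minor $\det\widetilde M_{R\setminus r_i}$ is nonnegative. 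Hence every term $z_{r_i}\det\widetilde M_{R\setminus r_i}$ must vanish, so all $(k+1)\times(k+1)$ minors on these rows are zero. In that case the rows $r_1,\ldots,r_{k+2}$ of $\widetilde M$ have rank at most $k$.

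The degenerate case is the main obstacle. When all these minors vanish, the determinant identity says nothing, and we cannot immediately conclude. I would handle it in one of two ways. The first is to perturb $M$ by a totally positive matrix, for instance $M_\epsilon=G_\epsilon M G_\epsilon$ with the Gaussian kernel $G_\epsilon=(e^{-(i-j)^2/\epsilon})$. Totally positive matrices are dense in the totally nonnegative ones, so all minors of $\widetilde M_\epsilon$ become strictly positive. Then the expansion above has strictly positive terms, which is a contradiction. Since $M\mathbf x$ has no zeros, $\operatorname{var}(M_\epsilon\mathbf x)=\operatorname{var}(M\mathbf x)$ for small $\epsilon$, so the bound passes to the limit; this is exactly why the lemma assumes nonzero coordinates. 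The second option is simply to cite \cite[Theorem~3.4(a)]{Pinkus2010}, which gives $S^-(M\mathbf x)\le S^-(\mathbf x)$. Because both vectors are zero-free, $S^-$ coincides with $\operatorname{var}$, and the lemma follows.
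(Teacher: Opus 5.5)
Your fallback option, citing Pinkus's Theorem~3.4(a) and noting that $S^-$ coincides with $\operatorname{var}$ on zero-free vectors, is exactly what the paper does; the paper gives no argument beyond that citation. Your main route is a genuine, essentially classical proof with three steps. First, you compress $M$ along the sign blocks of $\mathbf x$ to the $(k+1)$-column matrix $\widetilde M$, which stays totally nonnegative because the blocks are consecutive and the weights $|x_i|$ are positive. Second, the Laplace expansion of the singular bordered matrix $[\mathbf z_R\mid\widetilde M_R]$ shows that a vector in the column space of a strictly totally positive matrix with $k+1$ columns has at most $k$ sign changes. Third, you pass to general $M$ by approximation, which is legitimate precisely because $M\mathbf x$ is zero-free. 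This makes the lemma self-contained modulo Whitney's density theorem; the paper's citation is shorter but outsources the whole argument.

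One step needs correcting. Your example $M_\epsilon=G_\epsilon MG_\epsilon$ has the same rank as $M$, so it is strictly totally positive only when $M$ has full rank. If $\operatorname{rank}M\le k$, every $(k+1)\times(k+1)$ minor of $\widetilde M_\epsilon$ still vanishes, and your contradiction does not materialize. There are two ways to repair this. You can invoke Whitney's density theorem itself, noting that the Gaussian sandwich alone does not prove it. Alternatively, treat this case directly. Set $r=\operatorname{rank}M\ge1$. By Cauchy--Binet, all minors of $G_\epsilon MG_\epsilon$ of order at most $r$ are positive, so any $r$ of its columns span its column space. Your bordered-determinant argument with these $r$ columns then gives $\operatorname{var}(G_\epsilon MG_\epsilon\mathbf x)\le r-1<k$. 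In the paper's only use of the lemma, the matrices are nonsingular bidiagonal, so there your Gaussian perturbation already suffices.
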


We now use this property to bound the displacement of sign-change positions.
\begin{lemma}
\label{lem:variation-location}
Let $\alpha>0$ and
\[
 P(t)=\sum_{j=0}^{m}a_jt^j,
 \qquad
 Q(t)=(1+\alpha t)P(t)=\sum_{j=0}^{m+1}b_jt^j
\]
have no zero coefficients.  If the sign-change positions of their
coefficient sequences are $ i_1<\cdots<i_r$ and $j_1<\cdots<j_r,$
respectively, then $i_\ell\leq j_\ell\leq i_\ell+1$ for $1\leq\ell\leq r.$
\end{lemma}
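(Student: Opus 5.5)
The plan is to reduce both inequalities to Lemma~\ref{lem:variation-diminishing}, applied to truncated pieces of the coefficient relation. Put $a_{-1}=a_{m+1}=0$. Comparing coefficients in $Q=(1+\alpha t)P$ gives
\[
 b_j=a_j+\alpha a_{j-1}\qquad(0\leq j\leq m+1).
\]
Hence any consecutive block of $b$'s is the image of a consecutive block of $a$'s under a nonnegative bidiagonal matrix.

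First I would check that such matrices are totally nonnegative. A square nonnegative bidiagonal matrix (lower or upper) has every minor either zero or a product of entries. The reason is that any square submatrix is block triangular with bidiagonal diagonal blocks, so its determinant reduces to a product of entries. All entries here are $1$ or $\alpha>0$, so all minors are nonnegative. By hypothesis every coordinate of every block of $a$'s and of $b$'s is nonzero, so Lemma~\ref{lem:variation-diminishing} applies to each such block.

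For the lower bound $i_\ell\leq j_\ell$, fix $0\leq k\leq m$. The vector $(b_0,\ldots,b_k)$ is $L(a_0,\ldots,a_k)$, where $L$ is lower bidiagonal with $1$ on the diagonal and $\alpha$ on the subdiagonal. Lemma~\ref{lem:variation-diminishing} then gives
\[
 \#\{\ell:j_\ell\leq k\}\leq\#\{\ell:i_\ell\leq k\}\qquad\text{for all }k.
\]
For $k=m+1$ the same inequality holds trivially, since both counts equal $r$. Taking $k=j_\ell$ shows that at least $\ell$ of the $i$'s are at most $j_\ell$, that is, $i_\ell\leq j_\ell$.

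For the upper bound $j_\ell\leq i_\ell+1$, fix $0\leq k\leq m$ and look at the tail. The vector $(b_{k+1},\ldots,b_{m+1})$ equals $U(a_k,\ldots,a_m)$, where $U$ is the square upper bidiagonal matrix with $\alpha$ on the diagonal and $1$ on the superdiagonal. This uses $b_{m+1}=\alpha a_m$. Sign changes of the $b$-tail sit at positions $j_\ell\geq k+2$, and those of the $a$-tail at positions $i_\ell\geq k+1$. So Lemma~\ref{lem:variation-diminishing} gives
\[
 \#\{\ell:j_\ell\geq k+2\}\leq\#\{\ell:i_\ell\geq k+1\}.
\]
Both sequences have exactly $r$ sign changes, so taking complements yields $\#\{\ell:j_\ell\leq k+1\}\geq\#\{\ell:i_\ell\leq k\}$. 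Setting $k=i_\ell$ shows that at least $\ell$ of the $j$'s are at most $i_\ell+1$, that is, $j_\ell\leq i_\ell+1$.

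The only delicate point is the index bookkeeping in the tail step: a sign change at position $j$ involves the pair $(j-1,j)$, so it must be matched to the right block, and the boundary term $b_{m+1}=\alpha a_m$ must be absorbed so that $U$ is square. The total nonnegativity of bidiagonal matrices is routine.
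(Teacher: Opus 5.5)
Your proof is correct and follows essentially the same route as the paper: you apply variation diminishing to the prefix $(b_0,\ldots,b_k)=L(a_0,\ldots,a_k)$ and to the tail $(b_{k+1},\ldots,b_{m+1})$ through a square bidiagonal matrix (the paper writes the tail in reversed order so that its matrix is lower bidiagonal). You then make the same complement count using that both coefficient sequences have exactly $r$ sign changes, and your separate treatment of $j_\ell=m+1$ via the trivial case $k=m+1$ matches the paper's argument by contradiction from $j_\ell<i_\ell\leq m$.
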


\begin{proof}

Comparing coefficients in $Q(t)=(1+\alpha t)P(t)$ gives
\[
 b_0=a_0,\qquad
 b_j=a_j+\alpha a_{j-1}\quad(1\leq j\leq m),\qquad
 b_{m+1}=\alpha a_m.
\]
Then, for each  $0\leq p\leq m$,
\[
 \begin{pmatrix}
  b_0\\
  b_1\\
  \vdots\\
  b_p
 \end{pmatrix}
 =
 \begin{pmatrix}
  1      &        &        &   \\
  \alpha & 1      &        &   \\
         & \ddots & \ddots &   \\
         &        & \alpha & 1
 \end{pmatrix}
 \begin{pmatrix}
  a_0\\
  a_1\\
  \vdots\\
  a_p
 \end{pmatrix}, \qquad
 \begin{pmatrix}
  b_{m+1}\\
  b_m\\
  \vdots\\
  b_{p+1}
 \end{pmatrix}
 =
 \begin{pmatrix}
  \alpha &        &        &   \\
  1      & \alpha &        &   \\
         & \ddots & \ddots &   \\
         &        & 1      & \alpha
 \end{pmatrix}
 \begin{pmatrix}
  a_m\\
  a_{m-1}\\
  \vdots\\
  a_p
 \end{pmatrix}.
\]
Every minor of either displayed matrix is either zero or a
nonnegative power of $\alpha$, so both matrices are totally
nonnegative.
Lemma~\ref{lem:variation-diminishing}, together with the fact that
reversing a sequence does not change its variation, therefore gives
\begin{align}
 \operatorname{var}(b_0,\ldots,b_p)
 &\leq\operatorname{var}(a_0,\ldots,a_p),
 \label{eq:variation-prefix}\\
 \operatorname{var}(b_{p+1},\ldots,b_{m+1})
 &\leq\operatorname{var}(a_p,\ldots,a_m).
 \label{eq:variation-suffix}
\end{align}
Since both full coefficient sequences have variation $r$, additivity
of variation across a common endpoint and
\eqref{eq:variation-suffix} yield
\begin{equation}
\label{eq:variation-shifted-prefix}
\begin{aligned}
 \operatorname{var}(b_0,\ldots,b_{p+1})
 =r-\operatorname{var}(b_{p+1},\ldots,b_{m+1})\geq \operatorname{var}(a_0,\ldots,a_p).
\end{aligned}
\end{equation}

For every $p<i_\ell$, the definition of $i_\ell$ and
\eqref{eq:variation-prefix} give $
 \operatorname{var}(b_0,\ldots,b_p)
 \leq\operatorname{var}(a_0,\ldots,a_p)<\ell.$
If $j_\ell<i_\ell$, then \eqref{eq:variation-prefix} with
$p=j_\ell$ gives $
 \ell
 =\operatorname{var}(b_0,\ldots,b_{j_\ell})
 \leq\operatorname{var}(a_0,\ldots,a_{j_\ell})
 <\ell,$
a contradiction.  Hence $j_\ell\geq i_\ell$.  On the other hand, taking
$p=i_\ell$ in \eqref{eq:variation-shifted-prefix} gives
\[
 \operatorname{var}(b_0,\ldots,b_{i_\ell+1})
 \geq\operatorname{var}(a_0,\ldots,a_{i_\ell})
 =\ell.
\]
Hence the $\ell$-th sign change of $Q$ occurs no later than position
$i_\ell+1$, so $j_\ell\leq i_\ell+1$.
\end{proof}

Let $H$ be a bipartite chain graph of incidence rank $k\geq1$.
Deleting zero rows and zero columns from a bipartite incidence matrix
does not change its rank.  Since $H$ is a chain graph, its distinct
nonzero row supports are linearly ordered by inclusion; write
$\varnothing=R_0\subsetneq R_1\subsetneq\cdots\subsetneq R_\ell.$
For each $1\leq i\leq\ell$, choose
$v_i\in R_i\setminus R_{i-1}$.  The submatrix whose rows are the
distinct rows with supports $R_1,\ldots,R_\ell$ and whose columns are
$v_1,\ldots,v_\ell$ has $(i,j)$-entry one if and only if $j\leq i$.
It is therefore lower triangular with diagonal entries one, so these
$\ell$ rows are linearly independent.  Since every nonzero row is
equal to one of them, they also span the row space.  Hence
$\ell=k$, and thus $
 \varnothing=R_0\subsetneq R_1\subsetneq\cdots\subsetneq R_k.$
For $1\leq i\leq k$, let $p_i$ be the number of rows with support
$R_i$, and put $q_i=|R_i\setminus R_{i-1}|.$
 Define
\[
 C=\bigl(\sqrt{p_iq_j}\,\mathbf1_{\{j\leq i\}}\bigr)_
       {1\leq i,j\leq k},  \quad W=
 \begin{pmatrix}
 {\bf  0}&C\\
  C^{\mathsf T}& {\bf  0}
 \end{pmatrix},
 \quad
 \mathbf x=
 \bigl(\sqrt{p_1},\ldots,\sqrt{p_k},
       \sqrt{q_1},\ldots,\sqrt{q_k}\bigr)^{\mathsf T}.
\]
Write $c\geq0$ for the number of isolated vertices of $H$.  Set
\begin{equation}\label{eq:ferrers-quotient}
 M_0=\mathbf x\mathbf x^{\mathsf T}-W,
 \qquad
 M_c=
 \begin{pmatrix}
  M_0&\sqrt c\,\mathbf x\\
  \sqrt c\,\mathbf x^{\mathsf T}&c
 \end{pmatrix}
 \quad(c>0).
\end{equation}

With these matrices defined, we now show that the spectrum of $J-A(H)$ reduces to that of the smaller quotient matrix $M_0$ or $M_c$.
\begin{lemma}\label{lem:ferrers-quotient}
Let $H$ be a bipartite chain graph of incidence rank $k\geq1$.  If $c=0$, the spectrum of $J-A(H)$ consists of
the spectrum of $M_0$ together with zeros.  If $c>0$, it consists of
the spectrum of $M_c$ together with zeros.
\end{lemma}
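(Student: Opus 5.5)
We partition the vertex set of $H$ into $2k+1$ classes: for $1\le i\le k$, let $U_i$ be the $p_i$ rows (vertices on the $U$ side) with support $R_i$; let $V_j=R_j\setminus R_{j-1}$, of size $q_j$; let $Z$ be the $c$ isolated vertices. This covers every vertex, since zero rows and zero columns are isolated. The plan is to show that $N=J-A(H)$ is constant on each block $X\times Y$ of this partition. Then $N$ has rank at most the number of classes and can be written as $N=E\,\widetilde N\,E^{\mathsf T}$, where $E$ has orthonormal columns given by the normalized class indicators $\mathbf 1_X/\sqrt{|X|}$, and $\widetilde N$ is the symmetrically normalized quotient matrix. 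Concretely, if $N$ takes the value $\nu_{XY}$ on $X\times Y$, then $\widetilde N_{XY}=\sqrt{|X||Y|}\,\nu_{XY}$.

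Once this is in place, the spectral statement is standard. Since $E^{\mathsf T}E=I$, the matrix $EE^{\mathsf T}$ is the orthogonal projector onto the span of the class indicators, and $N$ vanishes on the orthogonal complement of that span. So the nonzero-relevant part of the spectrum of $N$ is exactly the spectrum of $\widetilde N=E^{\mathsf T}NE$, and the remaining $n-(2k+1)$ (or $n-2k$) eigenvalues are zero.

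It remains to check the block values and identify $\widetilde N$ with $M_0$ or $M_c$.
\begin{itemize}
\item Within any class, and between two $U$-classes, two $V$-classes, or $Z$ and anything, there are no $H$-edges, so $\nu=1$. The normalized entries are therefore $\sqrt{|X||Y|}$, which coincide with the corresponding entries of $\mathbf x\mathbf x^{\mathsf T}$ on the $U$/$V$ blocks. The $Z$ row and column become $\sqrt c\,\mathbf x$ and $c$.
\item Between $U_i$ and $V_j$, a vertex of $U_i$ is adjacent to all of $V_j$ when $V_j\subseteq R_i$, i.e.\ when $j\le i$, and to none of it otherwise. Hence $\nu=1-\mathbf 1_{\{j\le i\}}$ and the normalized entry is $\sqrt{p_iq_j}-C_{ij}$.
\end{itemize}
This gives exactly $\mathbf x\mathbf x^{\mathsf T}-W$ on the first $2k$ coordinates, so $\widetilde N=M_0$ when $c=0$ and $\widetilde N=M_c$ when $c>0$.

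The only delicate point is the bookkeeping of the partition: the $V_j$ must exhaust the nonisolated $V$-vertices (they do, since every nonisolated column lies in $R_k$), and diagonal entries must be treated correctly. Here $J-A(H)$ has diagonal $1$, matching the value $1$ used on same-class blocks, so no diagonal correction is needed.
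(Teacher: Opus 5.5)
Your proposal is correct and follows essentially the same route as the paper: the same partition into the $p_i$-, $q_j$-, and isolated blocks, the same orthonormal basis of normalized block indicators, and the same block-by-block identification of the compressed matrix with $\mathbf x\mathbf x^{\mathsf T}-W$ (bordered by $\sqrt c\,\mathbf x$ and $c$ when $c>0$). Your factorization $J-A(H)=E\widetilde N E^{\mathsf T}$ is the same argument as the paper's invariant-subspace formulation: the paper restricts to $\mathcal U$ and shows $\mathcal U^\perp\subseteq\ker(J-A(H))$.
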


\begin{proof}
Partition the vertices of $H$ into the row blocks of sizes
$p_1,\ldots,p_k$, the column blocks of sizes
$q_1,\ldots,q_k$, and, when $c>0$, the block of isolated vertices.
Let $\mathcal U\subseteq\mathbb R^{V(H)}$ be the subspace of vectors
that are constant on each block.  For each block of size $d$, take the
vector that equals $1/\sqrt d$ on that block and zero elsewhere.
Taking these vectors in the order just given produces an orthonormal
basis of $\mathcal U$.  Since every block of $A(H)$ and $J$ is
constant, both matrices leave $\mathcal U$ invariant.

The block of $A(H)$ between the $i$-th row block and the $j$-th column
block is an all-ones block if $j\leq i$ and a zero block otherwise.
Consequently, the corresponding entry of the matrix of
$A(H)|_{\mathcal U}$ is
$\sqrt{p_iq_j}\,\mathbf1_{\{j\leq i\}}=C_{ij}.$
All row--row and column--column blocks are zero, as are all blocks
involving the isolated vertices.  Hence the nonisolated part of this
matrix is $W$.

For two blocks of sizes $d$ and $e$, the corresponding entry of the
matrix of $J|_{\mathcal U}$ is $\sqrt{de}$.  Thus this matrix is the
outer product of the vector of square roots of the block sizes with
itself.  That vector is $\mathbf x$ when $c=0$ and
$\binom{\mathbf x}{\sqrt c}$ when $c>0$.  It follows from
\eqref{eq:ferrers-quotient} that the matrix of
$(J-A(H))|_{\mathcal U}$ is $M_0$ when $c=0$ and $M_c$ when $c>0$.

Now let $\mathbf v\in\mathcal U^\perp$.  Orthogonality to the
normalized indicator vector of each block implies that the coordinates
of $\mathbf v$ sum to zero on that block.  Since every row of $A(H)$
and $J$ is constant on each block, every coordinate of
$A(H)\mathbf v$ and $J\mathbf v$ is a linear combination of these
block sums.  Therefore
$ A(H)\mathbf v=J\mathbf v=0,$ which leads to
$ \mathcal U^\perp\subseteq\ker(J-A(H)).$
Complete the orthonormal basis of $\mathcal U$ above by an orthonormal
basis of $\mathcal U^\perp$.  With respect to the resulting basis,
\[
J-A(H)=
 \begin{cases}
  M_0\oplus {\bf  0},&c=0,\\
  M_c\oplus {\bf  0},&c>0.
 \end{cases}
\]
The assertion follows.
\end{proof}

For a polynomial $P(t)=\sum_{j=0}^m a_jt^j$ with no zero
coefficients, write $\operatorname{var}(P)=\operatorname{var}(a_0,\ldots,a_m).$

\begin{lemma}
\label{lem:real-rooted-variation}
Let $P$ be real-rooted and have no zero root or zero coefficient.
Then $\operatorname{var}(P)$ equals the number of positive roots of
$P$, counted with multiplicity.
\end{lemma}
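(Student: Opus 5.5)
The natural route is induction on the degree $m$ of $P$, peeling off one real root at a time. Lemma~\ref{lem:variation-location} is exactly suited to the case of a negative root: multiplying by $(1+\alpha t)$ with $\alpha>0$ changes neither the number of positive roots nor, as we will show, the variation. A positive root should add exactly one sign change. We then combine these with Descartes' rule of signs, which bounds variation from below.

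First step: the lower bound via Descartes. Descartes' rule of signs gives that the number of positive roots, counted with multiplicity, is at most $\operatorname{var}(P)$ and has the same parity. Applied to $P(-t)$, whose coefficient sequence is $(a_j(-1)^j)$, it gives that the number of negative roots is at most the variation of the alternated sequence. For a sequence of $m+1$ nonzero terms, each consecutive pair is a sign change in exactly one of $(a_j)$ and $((-1)^ja_j)$. Hence
\[
\operatorname{var}(a_0,\ldots,a_m)+\operatorname{var}(a_0,-a_1,\ldots,(-1)^ma_m)=m .
\]
Since $P$ is real-rooted of degree $m$ with no zero root, the numbers $N_+$ of positive roots and $N_-$ of negative roots satisfy $N_++N_-=m$. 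Combined with $N_+\le\operatorname{var}(P)$ and $N_-\le m-\operatorname{var}(P)$, this forces equality in both inequalities. This already finishes the argument, with no induction needed.

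To make it self-contained without quoting Descartes, I would instead prove only the inequality $N_+\le\operatorname{var}(P)$. Its standard proof goes by induction via Rolle's theorem: differentiating $t^{-j_0}P$ suitably reduces the degree, and one tracks how the variation changes. The derivative step is where care is needed, because intermediate polynomials may acquire zero coefficients; allowing zero coefficients and skipping them handles this. That bookkeeping is the only mildly delicate point.

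The main obstacle is simply the complementary counting identity. It holds because both sequences consist of nonzero terms, which is where the hypothesis of no zero coefficients is used. The hypothesis of no zero root guarantees $a_0\ne0$ and that every root is counted in either $N_+$ or $N_-$.
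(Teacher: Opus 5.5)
Your argument is correct and is the paper's proof: the identity $\operatorname{var}(P)+\operatorname{var}(P(-t))=\deg P$ holds for zero-free coefficient sequences, and Descartes' rule gives $N_+\le\operatorname{var}(P)$ and $N_-\le\operatorname{var}(P(-t))$, so $N_++N_-=\deg P$ (real-rootedness, no zero root) forces both bounds to be equalities. You can drop the induction in your opening paragraph, and it would not have worked anyway: Lemma~\ref{lem:variation-location} assumes that $P$ and $(1+\alpha t)P$ already have the same number of sign changes, so it cannot prove that multiplying by $1+\alpha t$ preserves the variation.
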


\begin{proof}
The coefficient of $t^j$ in $P(-t)$ is $(-1)^ja_j$.  Thus, at each
position, exactly one of the coefficient sequences of $P(t)$ and
$P(-t)$ has a sign change.  Hence
\[
 \operatorname{var}(P)+\operatorname{var}(P(-t))=\deg P.
\]
Descartes' rule of signs
\cite[Theorem]{Wang2004}  bounds the numbers of positive roots of
$P(t)$ and $P(-t)$ by these two variations, respectively.  These are
the numbers of positive and negative roots of $P$.  Since all roots
of $P$ are real and nonzero, their sum is $\deg P$, so both bounds are
equalities.
\end{proof}

The \emph{inertia} of a real symmetric matrix is the triple
$(n_+,n_-,n_0)$ giving the numbers of its positive, negative, and zero
eigenvalues, counted with multiplicity.  In the next lemma, the parameters $p_i,q_i,c$ in \eqref{eq:ferrers-quotient} are allowed to be arbitrary positive real
numbers.
\begin{lemma}\label{lem:positive-ferrers-tail}
Let $k\geq2$, and let
$\lambda_1\geq\cdots\geq\lambda_{2k+1}$ be the eigenvalues of $M_c$.
Then $\sum_{3\leq i<j\leq2k+1}\lambda_i\lambda_j\leq0.$
Moreover, $M_0$ has inertia $(k+1,k-1,0)$.
\end{lemma}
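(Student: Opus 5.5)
The plan has two parts: an inertia computation for $M_0$ by a rank-one perturbation argument, and a sign-variation argument, based on Lemmas~\ref{lem:variation-location} and~\ref{lem:real-rooted-variation}, for the coefficients of a polynomial whose roots encode the spectrum of $M_c$.

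\emph{Inertia of $M_0$.} Write $\mathbf a=(\sqrt{p_i})_i$ and $\mathbf b=(\sqrt{q_j})_j$, so that $C=D_{\mathbf a}LD_{\mathbf b}$. Here $L$ is the lower-triangular all-ones matrix and $D_{\mathbf a},D_{\mathbf b}$ are the positive diagonal matrices with diagonals $\mathbf a,\mathbf b$.
\begin{enumerate}
\item Since $C$ is nonsingular, $W$ has eigenvalues $\pm\sigma_i(C)$, so $-W$ has inertia $(k,k,0)$.
\item Adding the positive semidefinite rank-one matrix $\mathbf x\mathbf x^{\mathsf T}$ raises the number of positive eigenvalues by at most one and lowers the number of negative ones by at most one. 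Hence the inertia of $M_0$ is $(k,k,0)$, $(k+1,k-1,0)$, or has a zero eigenvalue.
\item The matrix determinant lemma gives
\[
\det M_0=\det(-W)\bigl(1-\mathbf x^{\mathsf T}W^{-1}\mathbf x\bigr),
\qquad
W^{-1}=\begin{pmatrix}\mathbf 0&C^{-\mathsf T}\\ C^{-1}&\mathbf 0\end{pmatrix}.
\]
\item Compute the quadratic form:
\[
\mathbf x^{\mathsf T}W^{-1}\mathbf x=2\,\mathbf b^{\mathsf T}C^{-1}\mathbf a=2\,\mathbf 1^{\mathsf T}L^{-1}\mathbf 1=2,
\]
because $L^{-1}$ is bidiagonal with $1$ on the diagonal and $-1$ below it, so its entries sum to $1$.
\item Therefore $\det M_0=-\det(-W)\neq0$, and the sign of $\det M_0$ is $-(-1)^k$. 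Since that sign equals $(-1)^{n_-}$, we get $n_-=k-1$, which gives inertia $(k+1,k-1,0)$.
\item The same bordering computation (a Schur complement with respect to the last coordinate) should show that $M_c$ is nonsingular with inertia $(k+1,k,0)$. I will record this for use below.
\end{enumerate}

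\emph{The tail inequality.} Consider
\[
R(t)=\det(I+tM_c)=\prod_i(1+\lambda_it),
\qquad
R(t)=(1+\lambda_1t)(1+\lambda_2t)\,T(t),\quad T(t)=\prod_{i\geq3}(1+\lambda_it).
\]
Here $\lambda_1,\lambda_2>0$, and the coefficient of $t^2$ in $T$ is exactly $\sum_{3\le i<j}\lambda_i\lambda_j$. Both $R$ and $T$ are real-rooted with no zero root. By Lemma~\ref{lem:real-rooted-variation}, each has exactly $k$ sign changes, namely the number of negative eigenvalues; the two factors $(1+\lambda_it)$ contribute only negative roots. Applying Lemma~\ref{lem:variation-location} twice, with $\alpha=\lambda_2$ and then $\alpha=\lambda_1$, shows that each sign-change position of $T$ moves right by at most one per factor. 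So if $j_\ell$ are the positions for $R$ and $i_\ell$ those for $T$, then $j_\ell-2\le i_\ell\le j_\ell$.

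The coefficients of $R$ are the sums $E_j$ of the $j\times j$ principal minors of $M_c$. I will compute their signs directly from the structure $M_c=\mathbf z\mathbf z^{\mathsf T}-(W\oplus0)$, with $\mathbf z=\binom{\mathbf x}{\sqrt c}$. A principal minor of a rank-one-minus-bipartite matrix is, by the determinant lemma again, $\det(-W_S)\,(1-\mathbf z_S^{\mathsf T}W_S^{-1}\mathbf z_S)$, and the inverse of the Ferrers quotient has the alternating bidiagonal pattern seen above. The goal is to show that the sign pattern of $(E_0,E_1,\ldots,E_{2k+1})$ is forced to be $+,+,-,-,+,+,\ldots$, that is, $j_\ell=2\ell$. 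Combined with $\deg T=2k-1$ and the requirement that $T$ has $k$ sign changes, the constraints $2\ell-2\le i_\ell\le2\ell$ pin the first sign change of $T$ at $i_1\in\{1,2\}$ and force $i_2\geq3$. Hence the coefficient of $t^2$ in $T$ is nonpositive, which is the claim.

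Zero coefficients will be handled by perturbing the real parameters $p_i,q_i,c$ (which the lemma allows) and passing to the limit, since the inequality is closed. The case $c=0$ follows by letting $c\to0$ or by the same argument with $M_0$.

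\emph{Main obstacle.} The hard step is establishing the exact sign pattern of the principal-minor sums $E_j$, or at least $j_1=2$ together with enough control on the later $j_\ell$ to exclude $i_1=i_2-1=1$ with $e_2>0$. I would first try to prove that every $j\times j$ principal minor of $M_c$ has sign $(-1)^{\lfloor j/2\rfloor}$ or vanishes, using the bidiagonal inverse computation on each principal Ferrers submatrix. This would make the pattern of $R$ immediate.
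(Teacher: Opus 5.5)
Your inertia computation for $M_0$ is correct. It is a determinant-lemma variant of the paper's bordered-matrix argument, and both rest on $\mathbf x^{\mathsf T}W^{-1}\mathbf x=2$.

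The tail inequality has a genuine gap, and it starts with the sign pattern you aim to prove, which is false. Indeed $E_2=\tfrac12\bigl((\tr M_c)^2-\tr(M_c^2)\bigr)=\sum_{j\le i}p_iq_j>0$. The $2\times2$ principal minor of $M_c$ on row block $i$ and column block $j\le i$ equals $p_iq_j>0$, so your proposed rule $(-1)^{\lfloor j/2\rfloor}$ for principal minors already fails at order $2$.

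The true pattern of $(E_0,\ldots,E_{2k+1})$ is $+,+,+,-,-,+,+,\ldots$, with sign changes at $j_\ell=2\ell+1$. The paper obtains it from principal minors of $M_c^{-1}$ rather than of $M_c$. The off-diagonal support graph of $M_c^{-1}$ is a $(2k+1)$-cycle, and its only nonzero diagonal entry is $-1/c$. Hence every principal minor of order $2r$ is zero or has sign $(-1)^r$, and every one of order $2r+1$ is zero or has sign $(-1)^{r+1}$. Some minor of each proper order is nonzero, and $E_j=\det M_c\sum_{|S|=2k+1-j}\det M_c^{-1}[S]$. (Even granting your pattern, $2\ell-2\le i_\ell$ at $\ell=2$ gives only $i_2\ge2$, not $i_2\ge3$.)

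Second, the correct pattern alone does not finish the argument. Write $f_1,f_2$ for the coefficients of $t,t^2$ in $T$. The constraints $2\ell-1\le i_\ell\le2\ell+1$ from Lemma~\ref{lem:variation-location} yield $i_2\ge3$. This contradicts $f_2>0$ only when $f_1<0$, since then $T$ begins $+,-,+$. For $k\ge3$ these constraints permit $i_1=3$, that is, $f_1>0$ and $f_2>0$.

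The missing ingredient is the separate estimate $f_1=\tr M_c-\lambda_1-\lambda_2\le0$. The paper proves it with Lemma~\ref{lem:ky-fan}, applied to an orthonormal pair, one vector supported on the row-plus-isolated coordinates and the other on the column coordinates. On these coordinates $M_c$ restricts to rank-one positive semidefinite blocks with traces $c+\sum p_i$ and $\sum q_i$. The case $f_1=0$ is then excluded by $2f_2=f_1^2-\sum_{i\ge3}\lambda_i^2<0$.

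Finally, your device of perturbing $p_i,q_i,c$ and passing to the limit needs generic zero-freeness of the tail coefficients, which are not polynomial in the parameters. The paper avoids this by arguing by contradiction from the open condition $f_1<0<f_2$ and perturbing the eigenvalue tuple itself.
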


\begin{proof}
The matrix $C$ is lower triangular with diagonal entries
$\sqrt{p_iq_i}$, and is therefore invertible.  Hence $W$ is congruent
to $\left(\begin{smallmatrix} {\bf  0}&I_k\\ I_k& {\bf  0}\end{smallmatrix}\right)$
and has inertia $(k,k,0)$.  The Schur complement of the bottom-right
entry $c$ of $M_c$ is
$\mathbf x\mathbf x^{\mathsf T}-W-
c^{-1}(\sqrt c\,\mathbf x)(\sqrt c\,\mathbf x)^{\mathsf T}=-W$.
Thus $M_c$ has inertia $(k+1,k,0)$, and
\begin{equation}\label{eq:ferrers-det}
 \det M_c=c\det(-W)=(-1)^kc\prod_{i=1}^kp_iq_i.
\end{equation}

Direct multiplication gives
\[
 (C^{-1})_{ij}
 =\frac{\mathbf1_{\{i=j\}}-\mathbf1_{\{i=j+1\}}}{\sqrt{q_ip_j}},
 \qquad
 W^{-1}=
 \begin{pmatrix}
  {\bf 0}&(C^{\mathsf T})^{-1}\\
  C^{-1}&{\bf 0}
 \end{pmatrix}.
\]
Let  $\mathbf e_i$ denote the $i$-th
standard basis vector.  Put
$\mathbf y= \begin{pmatrix}
  \mathbf e_k/\sqrt{p_k}\\
  \mathbf e_1/\sqrt{q_1}
 \end{pmatrix}.
$ Then
$W\mathbf y=\mathbf x$ and $\mathbf x^{\mathsf T}\mathbf y=2$, so
direct multiplication yields
\[
 M_c^{-1}=
 \begin{pmatrix}
  -W^{-1}&\mathbf y/\sqrt c\\
  \mathbf y^{\mathsf T}/\sqrt c&-1/c
 \end{pmatrix}.
\]

Let the off-diagonal support graph of $M_c^{-1}$ have the coordinates
of $M_c^{-1}$ as its vertices, with two distinct coordinates adjacent
whenever the corresponding matrix entry is nonzero.  The formula for
$C^{-1}$ shows that the $j$-th row coordinate is adjacent to the
$j$-th column coordinate and, when $j<k$, to the $(j+1)$-st column
coordinate.  The block $\mathbf y/\sqrt c$ gives the two additional
edges joining the isolated coordinate to the first column coordinate
and the $k$-th row coordinate.  Thus the graph is a $(2k+1)$-cycle,
with the coordinates occurring in the cyclic order: isolated, first
column, first row, second column, second row, and so on.  The only
nonzero diagonal entry of $M_c^{-1}$ is $-1/c$ at the isolated
coordinate.

Let $M=(m_{ij})_{1\leq i,j\leq s}$ be a proper principal submatrix of
$M_c^{-1}$.  Thus $1\leq s\leq2k$, and its off-diagonal support graph
is a disjoint union of paths.  The Leibniz expansion is
\[
 \det M
 =\sum_{\sigma\in\mathfrak S_s}
   \operatorname{sgn}(\sigma)\prod_{i=1}^s m_{i,\sigma(i)}.
\]
If a term in this sum is nonzero, then
$m_{i,\sigma(i)}\ne0$ for every $i$.  A cycle of length at least three
in $\sigma$ would therefore give a cycle in the off-diagonal support
graph of $M$, which is impossible.  Hence $\sigma$ consists only of
disjoint transpositions and fixed points.  Since the only nonzero
diagonal entry of $M_c^{-1}$ is $-1/c$ at the isolated coordinate,
this coordinate is the only possible fixed point.

If $s=2r$, every nonzero term consists of $r$ transpositions.  For
each transposition $(i\,j)$, its permutation sign is $-1$, while
$m_{ij}m_{ji}=m_{ij}^2>0$ because $M$ is symmetric.  Thus every
nonzero term has sign $(-1)^r$.  If $s=2r+1$, every nonzero term
consists of $r$ transpositions together with the fixed isolated
coordinate, whose diagonal entry is $-1/c$.  Hence every such term
has sign $(-1)^{r+1}$.  For a fixed principal submatrix of order $2r$, its determinant is
either zero or a sum of nonzero terms all having sign $(-1)^r$.
Hence, if the determinant is nonzero, it has sign $(-1)^r$.
Similarly, a principal minor of order $2r+1$ is either zero or has
sign $(-1)^{r+1}$.

For every $1\leq s\leq2k$, there is a nonzero principal minor of
order $s$.  If $s=2r$, choose $r$ pairwise disjoint edges of the
cycle and take their $2r$ endpoints.  The permutation that exchanges
the two endpoints of each chosen edge contributes a nonzero term to
the corresponding determinant.  If $s=2r+1$, take the isolated
coordinate together with the endpoints of $r$ pairwise disjoint edges
in the path obtained by deleting that coordinate.  The permutation
that fixes the isolated coordinate and exchanges the endpoints of
each chosen edge again contributes a nonzero term.  Since all nonzero
terms in either determinant have the same sign, the resulting
principal minors are nonzero.

Write $E(t)=\det(I+tM_c)=\sum_{j=0}^{2k+1}E_jt^j$. For a set $S$ of coordinates, let $M_c^{-1}[S]$ denote the principal
submatrix indexed by $S$.  In the expansion of
$\det(M_c^{-1}+tI)$, choosing $t$ from $j$ diagonal positions leaves
the determinant of the principal submatrix on the remaining
$2k+1-j$ coordinates.  Hence the identity $E(t)=\det(M_c)\det(M_c^{-1}+tI)$ shows
\[
 E_j
 =\det(M_c)
  \sum_{\substack{
        S\subseteq\{1,\ldots,2k+1\}\\
        |S|=2k+1-j}}
  \det\bigl(M_c^{-1}[S]\bigr).
\]
For each proper positive order, the preceding argument shows that the
sum on the right is nonzero.  It has sign $(-1)^r$ when its order is
$2r$, and sign $(-1)^{r+1}$ when its order is $2r+1$.  Moreover,
\eqref{eq:ferrers-det} gives
$\operatorname{sgn}\det M_c=(-1)^k$.
For $j=2\ell+1$, the relevant principal minors have order
$2(k-\ell)$, while for $j=2\ell$ they have order
$2(k-\ell)+1$.  Therefore
\[
 \begin{aligned}
  \operatorname{sgn}E_{2\ell+1}
  &=(-1)^k(-1)^{k-\ell}
    =(-1)^\ell
    &&(0\leq\ell\leq k),\\
  \operatorname{sgn}E_{2\ell}
  &=(-1)^k(-1)^{k-\ell+1}
    =(-1)^{\ell-1}
    &&(1\leq\ell\leq k).
 \end{aligned}
\]
Here the principal minor of order zero is understood to be $1$.
Together with $E_0=1$, this shows that every $E_j$ is nonzero.  The
signs of $E_0,E_1,\ldots,E_{2k+1}$ are
$+,+,+,-,-,+,+,-,-,\ldots$, so the sign changes occur exactly in
positions
\begin{equation}\label{eq:E-change-positions}
 3,5,7,\ldots,2k+1.
\end{equation}
Set
\[
 F(t)=\prod_{i=3}^{2k+1}(1+\lambda_it)
     =\sum_{j=0}^{2k-1}f_jt^j.
\]
Then $ E(t)=(1+\lambda_1t)(1+\lambda_2t)F(t).$
Since $M_c$ has inertia $(k+1,k,0)$, both $\lambda_1$ and
$\lambda_2$ are positive.
Define
\[
 \mathbf u=
 \frac{
  (\sqrt{p_1},\ldots,\sqrt{p_k},
   \overbrace{0,\ldots,0}^{k},\sqrt c)^{\mathsf T}}
 {\sqrt{c+\sum_{i=1}^kp_i}},
 \qquad
 \mathbf v=
 \frac{
  (\overbrace{0,\ldots,0}^{k},
   \sqrt{q_1},\ldots,\sqrt{q_k},0)^{\mathsf T}}
 {\sqrt{\sum_{i=1}^kq_i}}.
\]
Their supports are disjoint and both have norm one, so they are
orthonormal.  The principal block of $M_c$ on the row coordinates
together with the isolated coordinate is the outer product of
$(\sqrt{p_1},\ldots,\sqrt{p_k},\sqrt c)^{\mathsf T}$ with itself.
The principal block on the column coordinates is the outer product of
$(\sqrt{q_1},\ldots,\sqrt{q_k})^{\mathsf T}$ with itself.  Hence
\[
 \mathbf u^{\mathsf T}M_c\mathbf u
 =c+\sum_{i=1}^kp_i,
 \qquad
 \mathbf v^{\mathsf T}M_c\mathbf v
 =\sum_{i=1}^kq_i.
\]
Since
$\operatorname{tr}M_c=c+\sum_i p_i+\sum_iq_i$,
Lemma~\ref{lem:ky-fan} gives
$
 \lambda_1+\lambda_2
 \geq
 \mathbf u^{\mathsf T}M_c\mathbf u+
 \mathbf v^{\mathsf T}M_c\mathbf v
 =\operatorname{tr}M_c.$
Consequently,
\[
 f_1=\sum_{i=3}^{2k+1}\lambda_i
    =\operatorname{tr}M_c-\lambda_1-\lambda_2
    \leq0.
\]
The polynomial $F$ has $k$ positive roots and $k-1$ negative roots,
$(1+\lambda_2t)F$ has $k$ roots of each sign, and $E$ has $k$
positive roots and $k+1$ negative roots.

 Suppose that $f_2>0$.  We already know that $f_1\leq0$.
If $f_1=0$, then
$2f_2=f_1^2-\sum_{i=3}^{2k+1}\lambda_i^2<0$, because $M_c$ is nonsingular.
This contradicts $f_2>0$, so $f_1<0$.  Since $f_0=1$, the coefficient
sequence of $F$ begins with $+,-,+$.  Its first two sign changes
therefore occur in positions $1$ and $2$.

To remove possible zero coefficients, regard $
 (\lambda_1,\ldots,\lambda_{2k+1})$
temporarily as an independent tuple of real variables.  Every
coefficient of $F$, $(1+\lambda_2t)F$, and $E$ is a nonzero
polynomial in this tuple.  Hence the product of these finitely many
coefficient polynomials is a nonzero polynomial, and its zero set has
empty interior.  At the original tuple, all $\lambda_i$ and all
coefficients $E_j$ are nonzero, while $f_1<0<f_2$.  These strict sign
conditions persist in an open neighborhood of the original tuple.
We may therefore choose a tuple in this neighborhood for which none
of the three coefficient sequences contains a zero term.  At this
tuple, every $\lambda_i$ and every $E_j$ has its original sign, and
$f_1<0<f_2$ still holds.  For notational simplicity, retain
$\lambda_i$, $F$, and $E$ for the perturbed tuple.

Thus $F$, $(1+\lambda_2t)F$, and $E$ each have $k$ positive roots.
Lemma~\ref{lem:real-rooted-variation} shows that each has exactly $k$
coefficient sign changes.  The second sign change of $F$ is in
position $2$.  Lemma~\ref{lem:variation-location} shows that after
multiplication by $1+\lambda_2t$ it is in position $2$ or $3$, and
after the subsequent multiplication by $1+\lambda_1t$ it is in a
position no larger than $4$.  This contradicts
\eqref{eq:E-change-positions}, where the second sign change of $E$ is
in position $5$.  Therefore
$ \sum_{3\leq i<j\leq2k+1}\lambda_i\lambda_j=f_2\leq0.$

Finally, $W^{-1}\mathbf x=\mathbf y$ and
$\mathbf x^{\mathsf T}\mathbf y=2$, so
$\mathbf x^{\mathsf T}W^{-1}\mathbf x=2$.  Consider the bordered matrix
\[
 \widetilde M=
 \begin{pmatrix}
  -W&\mathbf x\\
  \mathbf x^{\mathsf T}&-1
 \end{pmatrix}.
\]
The Schur complement of $-W$ in $\widetilde M$ is
$-1-\mathbf x^{\mathsf T}(-W)^{-1}\mathbf x=1$.
Since $-W$ has inertia $(k,k,0)$, it follows that $\widetilde M$ has
inertia $(k+1,k,0)$.  On the other hand, the Schur complement of $-1$ in $ \widetilde M$  is $
 -W-\mathbf x(-1)^{-1}\mathbf x^{\mathsf T}=M_0.$
Therefore the inertia of $\widetilde M$ is the inertia of $M_0$ plus
$(0,1,0)$, which yields the desired result.
\end{proof}

We now transfer Lemma~\ref{lem:positive-ferrers-tail} from weighted
Ferrers quotients to ordinary chain graphs.  The case with no isolated
vertices follows by a limiting argument.

\begin{theorem}\label{thm:ferrers-tail}
Let $H$ be a bipartite chain graph on $n$ vertices.  If
$\mu_1\geq\cdots\geq\mu_n$ are the eigenvalues of $J-A(H)$, then
$ \sum_{3\leq i<j\leq n}\mu_i\mu_j\leq0.$
\end{theorem}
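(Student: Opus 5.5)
The plan is to reduce to Lemma~\ref{lem:positive-ferrers-tail} via Lemma~\ref{lem:ferrers-quotient}, splitting according to the incidence rank $k$ of $H$ and whether $c>0$. First observe that the quantity $\sum_{3\leq i<j\leq n}\mu_i\mu_j$ is unchanged by adjoining or deleting zero eigenvalues, provided the two largest eigenvalues are not among the discarded zeros. Indeed, it equals $e_2(\mu_3,\ldots,\mu_n)$, and zeros contribute nothing to $e_2$.

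Consider the nonzero part of the spectrum by cases.
\begin{itemize}
\item If $k=0$, then $J-A(H)=J$. Its spectrum is $n,0,\ldots,0$, so the tail sum is $0$.
\item If $k=1$, the matrix $M_0$ or $M_c$ has order at most $3$. In this case $J-A(H)$ is $B$ of a graph $K(n,p,q)$. By Lemma~\ref{lem:candidate-poly}, or directly when $c=0$ (where $M_0=\mathbf x\mathbf x^{\mathsf T}-W$ is $2\times2$), at most one eigenvalue is not among the top two. Hence the tail sum has at most one nonzero term and vanishes.
\item If $k\geq2$ and $c>0$, Lemma~\ref{lem:ferrers-quotient} shows that the spectrum of $J-A(H)$ is that of $M_c$ plus zeros. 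Since $M_c$ has inertia $(k+1,k,0)$, its two largest eigenvalues are positive, so they are $\mu_1,\mu_2$. The added zeros all lie below them. Lemma~\ref{lem:positive-ferrers-tail} then gives the inequality directly.
\end{itemize}

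The remaining case is $k\geq2$ and $c=0$, which I handle by a limiting argument. Since Lemma~\ref{lem:positive-ferrers-tail} allows arbitrary positive real $c$, take $c=\varepsilon\downarrow0$. The matrix $M_\varepsilon$ converges to $M_0\oplus(0)$, because the border $\sqrt\varepsilon\,\mathbf x$ and the corner $\varepsilon$ both tend to zero. Eigenvalues depend continuously on the matrix, so the ordered eigenvalues of $M_\varepsilon$ converge to those of $M_0\oplus(0)$. The tail sum $\sum_{3\leq i<j}\lambda_i\lambda_j$ is a continuous function of the ordered eigenvalue vector. It is $\leq0$ for each $\varepsilon>0$, hence $\leq0$ in the limit.

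It remains to identify this limit with the tail sum of $J-A(H)$. Lemma~\ref{lem:positive-ferrers-tail} shows that $M_0$ has inertia $(k+1,k-1,0)$. So the two largest eigenvalues of $M_0\oplus(0)$ are the two largest (positive) eigenvalues of $M_0$, and the extra zero sits in the tail, where it contributes nothing. Those same top two eigenvalues are $\mu_1,\mu_2$ of $J-A(H)$. The limiting tail sum therefore equals $e_2$ of the remaining eigenvalues of $M_0$ together with zeros, which is exactly $\sum_{3\leq i<j\leq n}\mu_i\mu_j$.

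The main point needing care is this last identification. One must check that the ordering is consistent, so that the top two eigenvalues are not affected by the padding zeros. This is exactly where the inertia statement $(k+1,k-1,0)$ for $M_0$ is used: it guarantees $\lambda_2(M_0)>0$. Everything else is routine continuity.
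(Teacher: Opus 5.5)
Your proof is correct and follows the paper's argument for incidence rank $k\geq2$: you reduce to Lemma~\ref{lem:positive-ferrers-tail} through Lemma~\ref{lem:ferrers-quotient}, using the inertia of $M_c$ when $c>0$, and using the limit $M_\varepsilon\to M_0\oplus(0)$ together with the inertia $(k+1,k-1,0)$ of $M_0$ when $c=0$, exactly as the paper does. The only difference is the case $k\leq1$: the paper handles it uniformly by noting that $-A(H)$ has at most one positive and one negative eigenvalue and applying rank-one interlacing with $J$, whereas you identify $J-A(H)$ explicitly as $J$ or $B(K(n,p,q))$ and read off its spectrum from Lemma~\ref{lem:candidate-poly} (or from $M_0=\operatorname{diag}(p,q)$ when $c=0$), and both routes are valid.
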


\begin{proof}
Let $k$ be the incidence rank of $H$, and let $N$ be a bipartite
incidence matrix of $H$.  After ordering the vertices according to the
two parts,
\begin{equation}\label{ah}
 A(H)=
 \begin{pmatrix}
  {\bf 0}&N\\
  N^{\mathsf T}&{\bf 0}
 \end{pmatrix}.
\end{equation}
Multiplying all rows and columns indexed by the second part of the
bipartition by $-1$ leaves the determinant unchanged and transforms
$tI-A(H)$ into $tI+A(H)$.  Hence
\[
 \det(tI-A(H))
 =\det(tI+A(H))
 =(-1)^n\det(-tI-A(H)).
\]
Hence the nonzero eigenvalues of $A(H)$ occur in opposite pairs.
Since
$\operatorname{rank}A(H)=2\operatorname{rank}N=2k$, there are exactly
$k$ such pairs.  In particular, if $k\leq1$, there is at most one.
Thus $-A(H)$ has at most one positive and at most one negative
eigenvalue.  Since $J$ is positive semidefinite of rank one,
rank-one interlacing shows that $J-A(H)$ has at most two positive and
at most one negative eigenvalue.  Hence at most one of $\mu_3,\ldots,\mu_n$ is nonzero, so $\sum_{3\leq i<j\leq n}\mu_i\mu_j=0.$
This proves the asserted inequality when $k\leq1$.

Assume now that $k\geq2$, and let $c$ be the number of isolated
vertices of $H$.  If $c>0$, Lemma~\ref{lem:positive-ferrers-tail}
gives the required inequality for $M_c$.  Since $M_c$ has inertia
$(k+1,k,0)$, Lemma~\ref{lem:ferrers-quotient} shows that the tail
$\mu_3,\ldots,\mu_n$ consists of the tail of $M_c$ together with
zeros.  Hence the asserted inequality follows.
Suppose that $c=0$.  For $\varepsilon>0$, put
\[
 M_\varepsilon=
 \begin{pmatrix}
  M_0&\sqrt{\varepsilon}\,\mathbf x\\
  \sqrt{\varepsilon}\,\mathbf x^{\mathsf T}&\varepsilon
 \end{pmatrix}.
\]
Lemma~\ref{lem:positive-ferrers-tail} applies to $M_\varepsilon$, and $
 \lim_{\varepsilon\to0^+}M_\varepsilon=M_0\oplus[{\bf 0}].$
The ordered eigenvalues of a real symmetric matrix depend continuously
on its entries, so the required inequality passes to
$M_0\oplus[{\bf 0}]$.  Since $k\geq2$ and $M_0$ has inertia $(k+1,k-1,0)$, $M_0$ has at
least three positive eigenvalues.  Thus adjoining a zero changes
neither its two largest eigenvalues nor $
 \sum_{3\leq i<j}\lambda_i\lambda_j,$
because every new product involving the adjoined eigenvalue is zero.  Lemma~\ref{lem:ferrers-quotient} therefore gives the required
inequality for $J-A(H)$.
\end{proof}

The tail inequality controls all pairwise products below the two
largest eigenvalues.  A nonsingular binary $2\times2$ minor now turns
that qualitative control into a numerical gap uniform over all chain
graphs of incidence rank at least two.

For a real matrix $Q$ of rank $r$, write $\sigma_1(Q)\geq\sigma_2(Q)\geq\cdots\geq\sigma_r(Q)>0$
for the positive square roots of the nonzero eigenvalues of
$Q^{\mathsf T}Q$, listed with multiplicity in decreasing order.  These
are the nonzero singular values of $Q$.
Let $H$ be a bipartite graph on $n$ vertices, and let
$N$ be a bipartite incidence matrix of $H$.  Put $k=\rank N$, the incidence rank of $H$.
For brevity, write $ \sigma_i=\sigma_i(N)\ (1\leq i\leq k).$
By \eqref{ah}, the nonzero eigenvalues of $A(H)$ are
$\pm\sigma_1,\ldots,\pm\sigma_k$.

\begin{lemma}\label{lem:defect}
Let  $H$ be  a chain graph of incidence rank at least two and  put $M=J-A(H)$. Then
\[
 2\bigl(
 \lambda_1(M)\lambda_2(M)-\lambda_2(M)^2
 +S_2(M)\bigl(n-S_2(M)\bigr)
 \bigr)>\frac12.
\]
\end{lemma}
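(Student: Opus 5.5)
The plan is to combine Theorem~\ref{thm:ferrers-tail} with the trace identities for $M$. This turns the left-hand side into (at least) a purely bipartite quantity, which a $2\times2$ Ferrers minor then bounds below. Write $\lambda_i=\lambda_i(M)$, $S=S_2(M)=\lambda_1+\lambda_2$, and let $e=|E(H)|=\|N\|_F^2=\sum_{i=1}^k\sigma_i^2$.

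\emph{Step 1 (trace identities).} Since $M=J-A(H)$ has all diagonal entries equal to one, $\operatorname{tr}M=n$, so $\sum_{i\geq3}\lambda_i=n-S$. Since $M$ is a $0$--$1$ matrix whose zero entries are exactly the $2e$ positions of the edges of $H$, we get $\operatorname{tr}M^2=n^2-2e$. Theorem~\ref{thm:ferrers-tail} says $\sum_{3\leq i<j}\lambda_i\lambda_j\leq0$, which is equivalent to
\[
 (n-S)^2\leq\sum_{i\geq3}\lambda_i^2=n^2-2e-\lambda_1^2-\lambda_2^2 .
\]
Rearranging and using $n^2-(n-S)^2=2S(n-S)+S^2$ together with $S^2-\lambda_1^2-\lambda_2^2=2\lambda_1\lambda_2$ gives
\[
 2e\leq 2S(n-S)+2\lambda_1\lambda_2 .
\]
Therefore the quantity in the lemma is at least $2e-2\lambda_2^2=2\bigl(\sum_i\sigma_i^2-\lambda_2^2\bigr)$.

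\emph{Step 2 (bounding $\lambda_2$).} By Weyl's inequality,
\[
 \lambda_2(J-A(H))\leq\lambda_2(J)+\lambda_1(-A(H))=0+\sigma_1 ,
\]
since the spectrum of $A(H)$ is symmetric with largest value $\sigma_1$. Also $\lambda_2\geq0$, because $M$ has at least three positive eigenvalues by Lemma~\ref{lem:positive-ferrers-tail} and Lemma~\ref{lem:ferrers-quotient}; alternatively one simply works with $\lambda_2^2\leq\sigma_1^2$ after checking the sign. Hence $\lambda_2^2\leq\sigma_1^2$, and it suffices to show
\[
 \sum_{i\geq2}\sigma_i^2>\tfrac14 .
\]

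\emph{Step 3 (a $2\times2$ minor).} Since the incidence rank is at least two, the Ferrers structure recorded before Lemma~\ref{lem:ferrers-quotient} supplies rows with supports $R_1\subsetneq R_2$ and columns $v_1\in R_1$, $v_2\in R_2\setminus R_1$. These give the submatrix $\left(\begin{smallmatrix}1&0\\1&1\end{smallmatrix}\right)$, whose singular values are $(\sqrt5\pm1)/2$. By interlacing of singular values for submatrices, $\sigma_2(N)\geq(\sqrt5-1)/2$. Hence
\[
 \sum_{i\geq2}\sigma_i^2\geq\sigma_2^2\geq\frac{3-\sqrt5}{2}>0.38>\frac14 ,
\]
which yields the strict inequality in the lemma.

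The main obstacle is Step 1: one must see that the tail inequality of Theorem~\ref{thm:ferrers-tail} converts exactly into $2e\leq2S(n-S)+2\lambda_1\lambda_2$, which is precisely why the expression in the lemma has this shape. Once that is in place, the Weyl bound and the minor are routine. In Step 2 I would double-check that $\lambda_2(M)\geq0$ (it is in fact positive), so that squaring the Weyl bound is legitimate.
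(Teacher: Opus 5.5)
Your proposal is correct and follows essentially the same route as the paper. The trace identities together with Theorem~\ref{thm:ferrers-tail} bound the quantity below by $2\bigl(\sum_i\sigma_i^2-\lambda_2(M)^2\bigr)$. The bound $\lambda_2(M)\le\sigma_1$ then leaves at least $2\sigma_2^2$; you get it from Weyl, the paper from rank-one interlacing, which also gives $\lambda_2(M)\ge\sigma_2>0$ and settles your sign check. Finally, a $2\times2$ minor gives $\sigma_2^2\ge(3-\sqrt5)/2$. Taking the specific Ferrers minor $\left(\begin{smallmatrix}1&0\\1&1\end{smallmatrix}\right)$ from $R_1\subsetneq R_2$ just streamlines the paper's two-case check, since $I_2$ cannot occur in a Ferrers matrix anyway.
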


\begin{proof}
Write $e(H)=|E(H)|$.  Since $N$ has exactly $e(H)$ entries equal to
one and $M=J-A(H)$ has $n^2-2e(H)$ entries equal to one,
\[
 e(H)=\tr(N^{\mathsf T}N)=\sum_{i=1}^k\sigma_i^2,
 \qquad
 \tr M=n,
 \qquad
 \tr(M^2)=\sum_{u,v}M_{uv}^2=n^2-2e(H).
\]
Therefore
\[
 \sum_{1\leq i<j\leq n}\lambda_i(M)\lambda_j(M)
 =\frac{(\tr M)^2-\tr(M^2)}2=e(H).
\]
Since $\sum_{i=3}^n\lambda_i(M)=n-S_2(M),$
separating the pairs involving the two largest eigenvalues gives
\[
\begin{aligned}
 e(H)=\lambda_1(M)\lambda_2(M)
   +S_2(M)\bigl(n-S_2(M)\bigr)+\sum_{3\leq i<j\leq n}
          \lambda_i(M)\lambda_j(M).
\end{aligned}
\]
Consequently,
\[
\begin{aligned}
 &\lambda_1(M)\lambda_2(M)-\lambda_2(M)^2
   +S_2(M)\bigl(n-S_2(M)\bigr)\\
 &\qquad=(\sigma_1^2-\lambda_2(M)^2)
   +\sum_{i=2}^k\sigma_i^2
   -\sum_{3\leq i<j\leq n}
      \lambda_i(M)\lambda_j(M).
\end{aligned}
\]

The positive eigenvalues of $-A(H)$ begin with
$\sigma_1,\sigma_2$.  Since $M=-A(H)+J$ and $J$ is positive
semidefinite of rank one, rank-one interlacing gives
$ \lambda_1(M)\geq\sigma_1
 \geq\lambda_2(M)\geq\sigma_2>0.$
Together with Theorem~\ref{thm:ferrers-tail}, this implies
\begin{equation}\label{a1}
\lambda_1(M)\lambda_2(M)-\lambda_2(M)^2
 +S_2(M)\bigl(n-S_2(M)\bigr)
 \geq\sigma_2^2.
\end{equation}
By hypothesis, $\rank N\geq2$, so $N$ contains a nonsingular
$2\times2$ submatrix $R$.  Up to row and column permutations, $R$ is
either $I_2$ or $
 \begin{pmatrix}
  1&1\\
  0&1
 \end{pmatrix}.$
The first has both squared singular values equal to one.  For the
second,
\[
 R^{\mathsf T}R=
 \begin{pmatrix}
  1&1\\
  1&2
 \end{pmatrix},
 \qquad
 \det(xI-R^{\mathsf T}R)=x^2-3x+1.
\]
Thus $\sigma_2(R)^2$ is either $1$ or $(3-\sqrt5)/2$, and hence

\begin{equation}\label{a2}
\sigma_2(R)^2\geq\frac{3-\sqrt5}{2}.
\end{equation}
Let $N_0$ consist of the two columns of $N$ containing $R$.  After
permuting rows and columns, write
\[
 N_0=\begin{pmatrix}R\\ X\end{pmatrix},
 \qquad
 N=\begin{pmatrix}N_0&Y\end{pmatrix}.
\]
Then
\[
 N_0^{\mathsf T}N_0
 =R^{\mathsf T}R+X^{\mathsf T}X,
 \qquad
 NN^{\mathsf T}
 =N_0N_0^{\mathsf T}+YY^{\mathsf T}.
\]
Both $X^{\mathsf T}X$ and $YY^{\mathsf T}$ are positive
semidefinite.  Eigenvalue monotonicity, together with the equality of
the nonzero spectra of $N^{\mathsf T}N$ and $NN^{\mathsf T}$ and
similarly for $N_0$, gives
\begin{equation}\label{a3}
 \sigma_2^2\ge \sigma_2(N_0)^2\ge \sigma_2(R)^2.
\end{equation}
Consequently, \eqref{a1}, \eqref{a2} and \eqref{a3} yield
\[
\begin{aligned}
 2\bigl(
 \lambda_1(M)\lambda_2(M)-\lambda_2(M)^2
 +S_2(M)\bigl(n-S_2(M)\bigr)
 \bigr)
 \geq3-\sqrt5>\frac12.
\end{aligned}
\]
\end{proof}

We now combine the preceding estimates with the structural reduction.
The defect bound first forces an edge-maximal maximizer into incidence
rank one, where the integer optimization determines its value.  We
then return to an arbitrary maximizer and analyze equality in the
variational threshold to obtain uniqueness.

\begin{proof}[\textbf{\upshape Proof of Theorem~\ref{thm:main}}]
We first determine the maximum value.  Choose an $n$-vertex graph
$G$ maximizing $S_2(G)$ and, among all such maximizers, choose one
with the largest number of edges.  By
Proposition~\ref{prop:ferrers-reduction}, $G$ is connected and
$\overline G$ is a chain graph.  Put
\[
 a=\lambda_1(B(G)),
 \qquad
 b=\lambda_2(B(G)),
\]
and let $k$ be the incidence rank of $\overline G$.  Recall that
$-\tau_n$ is the negative eigenvalue of $B(K_n^\star)$.  By the maximality of $G$, \eqref{eq:candidate-value} and
\eqref{eq:tau-envelope} give
\[
 a+b=S_2(B(G))
 \geq S_2(B(K_n^\star))
 =S_2(K_n^\star)+2
 =n+\tau_n>n.
\]

If $k=0$, then $G=K_n$ and $a+b=S_2(J)=n$, contradicting the
preceding lower bound. If  $k\geq2$, then put
$ D=2\bigl(ab-b^2+(a+b)(n-a-b)\bigr).$
Since $B(G)=J-A(\overline G)$, Lemma~\ref{lem:defect} gives
$D>1/2$.  Using the definition of $f_n$ and expanding $D$, we obtain
\[
\begin{aligned}
 f_n(a+b)
 &=\frac34(a+b)^2+(a+b-n)^2\\
 &=n^2-D-\frac14(a-3b)^2\\
 &<n^2-\frac12.
\end{aligned}
\]
This together with the upper bound in \eqref{eq:candidate-deficit} gives
$f_n(a+b)<f_n(n+\tau_n).$
However,
\[
 f_n'(x)=\frac72x-2n>0
 \qquad(x>4n/7).
\]
Since $a+b\geq n+\tau_n>n$, it follows that
$f_n(a+b)\geq f_n(n+\tau_n)$, a contradiction.  Thus $k\geq2$ is
also impossible.

Therefore $k=1$.  Hence $\overline G$ consists of a complete
bipartite graph and isolated vertices, so
$G\cong K(n,p,q)$ for some $p,q>0$ with $p+q\leq n$.
Proposition~\ref{prop:integer-opt} and
\eqref{eq:candidate-value} give
\[
 a+b=S_2(B(G))
     =S_2(G)+2
     \leq S_2(K_n^\star)+2
     =n+\tau_n.
\]
Together with the preceding lower bound, this yields
$a+b=n+\tau_n$. It follows that
\begin{equation}\label{eq:looped-maximum}
 \max_{|V(G)|=n}S_2(B(G))=n+\tau_n.
\end{equation}
Together with \eqref{eq:candidate-value}, this proves
\eqref{eq:main-bound}.

We now determine the equality case.  Let $G$ be any $n$-vertex graph
attaining equality in \eqref{eq:main-bound}.  We first show that $G$
is connected.  Suppose
otherwise.  If its two largest eigenvalues arise from distinct
components of orders $n_1$ and $n_2$, then they are the spectral
radii of those components, and hence
\[
 S_2(G)\leq(n_1-1)+(n_2-1)\leq n-2
 <S_2(K_n^\star),
\]
a contradiction.
Thus both eigenvalues arise from one component $H$ of order $m<n$,
and $S_2(G)=S_2(H)$. If $m\geq5$, applying \eqref{eq:main-bound} at order $m$ and then Lemma~\ref{lem:strict-growth}
gives $S_2(H)\leq S_2(K_m^\star)<S_2(K_n^\star),$
again a contradiction.

It remains to consider $m\leq4$.  Since $H$ is connected and
contributes two eigenvalues, $2\leq m\leq4$.  Write $K_3^+$ for the graph obtained from $K_3$ by adjoining a pendant vertex.  Up to isomorphism, the connected graphs of orders
$2$, $3$, and $4$ are exactly those listed below, and direct
calculation gives
\[
\setlength{\arraycolsep}{4pt}
\renewcommand{\arraystretch}{1.25}
\begin{array}{c|c|cc|cccccc}
 |V(H)|
 &2
 &\multicolumn{2}{c|}{3}
 &\multicolumn{6}{c}{4}\\ \hline
 H
 &K_2
 &P_3&K_3
 &P_4&K_{1,3}&C_4&K_3^+&K_4-e&K_4\\
 S_2(H)
 &0
 &\sqrt2&1
 &\sqrt5&\sqrt3&2&1-\gamma&
   \dfrac{1+\sqrt{17}}2&2
\end{array}
\]
where $\gamma$ denotes a negative root of $x^3-x^2-3x+1$.
Descartes' rule applied after replacing $x$ by $-x$ shows that it is
the unique negative root.  Since the polynomial is negative at
$-2$ and positive at $-1$, we have
$-2<\gamma<-1$, and hence $
 1-\gamma<3.$
All the other entries in the table are also less than $3$.  Therefore
$S_2(H)<3$.
By \eqref{eq:candidate-value} and
Lemma~\ref{lem:strict-growth},
$ S_2(K_n^\star)\geq S_2(K_5^\star)
 >3>S_2(H)=S_2(G),$
contrary to $S_2(G)=S_2(K_n^\star)$.  Thus $G$ is connected.

Consequently, $B(G)$ is irreducible.  Choose a positive unit Perron
eigenvector $\mathbf x$ of $B(G)$ and a unit eigenvector
$\mathbf y\perp\mathbf x$ for its second largest eigenvalue, and set
$\kappa_{uv}=x_ux_v+y_uy_v.$
For $u\ne v$, adding the edge $uv$ changes
$
 \mathbf x^{\mathsf T}B(G)\mathbf x+
 \mathbf y^{\mathsf T}B(G)\mathbf y$
by $2\kappa_{uv}$, while deleting it changes this sum by
$-2\kappa_{uv}$.  Lemma~\ref{lem:ky-fan} and the optimality of $G$
therefore give
\begin{equation}\label{eq:weak-threshold}
 \kappa_{uv}\leq0\quad\text{if }B(G)_{uv}=0,
 \qquad
 \kappa_{uv}\geq0\quad\text{if }B(G)_{uv}=1
 \qquad(u\ne v).
\end{equation}

Define $\widehat B$ by $\widehat B_{uu}=1$ and, for $u\ne v$,
$
 \widehat B_{uv}=
 \begin{cases}
  1,&\kappa_{uv}\geq0,\\
  0,&\kappa_{uv}<0.
 \end{cases}
$
Equation~\eqref{eq:weak-threshold} shows that every one-entry of
$B(G)$ remains one in $\widehat B$.  For a zero entry, it gives
$\kappa_{uv}\leq0$, so the definition of $\widehat B$ changes that
entry to one exactly when $\kappa_{uv}=0$.  Each such change contributes
$2\kappa_{uv}=0$ to the Rayleigh sum.  Therefore
\[
 \mathbf x^{\mathsf T}\widehat B\mathbf x
 +\mathbf y^{\mathsf T}\widehat B\mathbf y
 =S_2(B(G))=n+\tau_n.
\]

By \eqref{eq:looped-maximum} and Lemma~\ref{lem:ky-fan},
\[
 n+\tau_n\geq S_2(\widehat B)
 \geq\mathbf x^{\mathsf T}\widehat B\mathbf x+
      \mathbf y^{\mathsf T}\widehat B\mathbf y
 =n+\tau_n.
\]
Thus equality holds throughout, and
$\mathcal U=\operatorname{span}\{\mathbf x,\mathbf y\}$ attains the
maximum in Lemma~\ref{lem:ky-fan} for $\widehat B$.

Since $\widehat B\geq B(G)$ entrywise and $B(G)$ is irreducible,
$\widehat B$ is irreducible.  Its Perron eigenvalue is therefore
simple.  The equality statement in Lemma~\ref{lem:ky-fan} shows that
$\mathcal U$ has an orthonormal basis consisting of a positive Perron
eigenvector $\widehat{\mathbf x}$ of $\widehat B$ and an eigenvector
$\widehat{\mathbf y}$ for its second largest eigenvalue.  Since both
pairs are orthonormal bases of $\mathcal U$,
\[
 \widehat{\mathbf x}\widehat{\mathbf x}^{\mathsf T}
 +\widehat{\mathbf y}\widehat{\mathbf y}^{\mathsf T}
 =
 \mathbf x\mathbf x^{\mathsf T}
 +\mathbf y\mathbf y^{\mathsf T}.
\]
In particular, $\widehat x_u\widehat x_v+\widehat y_u\widehat y_v
 =\kappa_{uv}.$
The definition of $\widehat B$ and the preceding identity satisfy the
hypotheses of Lemma~\ref{lem:threshold-complement}.  Hence the graph
with adjacency matrix $J-\widehat B$ is a chain graph.

Let $\widehat H$ be the chain graph with adjacency matrix
$J-\widehat B$.  If $\widehat H$ had incidence rank zero, then its
bipartite incidence matrix, and hence $A(\widehat H)$, would be zero.
Thus $\widehat B=J$, giving
$S_2(\widehat B)=n<n+\tau_n$, a contradiction.
If $\widehat H$ had incidence rank at least two, the preceding argument
excluding $k\geq2$, applied to
$\widehat B=J-A(\widehat H)$, would give
$ f_n(S_2(\widehat B))<n^2-\frac12.$
On the other hand, $S_2(\widehat B)=n+\tau_n$ and
\eqref{eq:candidate-deficit} give
\[
 f_n(S_2(\widehat B))
 =f_n(n+\tau_n)>n^2-\frac12,
\]
again a contradiction.  Hence $\widehat H$ has incidence rank one.
As observed after Proposition~\ref{prop:ferrers-reduction}, an
incidence-rank-one chain graph consists of a complete bipartite graph
and isolated vertices.  Hence
$\widehat B=B(K(n,p,q))$ for some $p,q>0$ with $p+q\leq n$.
Since $\widehat B$ is a global maximizer,
Proposition~\ref{prop:integer-opt} gives, after relabelling,
$ \widehat B=B(K_n^\star).$

It remains to show that $\widehat B=B(G)$.  Set
\[
 p=\left\lceil\frac{s_n}{2}\right\rceil,\qquad
 q=\left\lfloor\frac{s_n}{2}\right\rfloor,\qquad
 c=n-s_n.
\]
Since the rows of $\widehat B=B(K_n^\star)$ are constant on each of
the $p$-, $q$-, and $c$-parts, the image of $\widehat B$ is contained
in the three-dimensional space of vectors that are constant on these
parts.  Lemma~\ref{lem:candidate-poly} gives
$\operatorname{rank}\widehat B=3$, so its image is exactly this space.

The same lemma shows that $\widehat B$ has two positive eigenvalues
and one negative eigenvalue $-\tau_n$.  Since $\mathcal U$ is a
maximizing two-dimensional subspace for $\widehat B$, it is the
positive spectral subspace.  Let $\mathbf w$ be a unit eigenvector for
$-\tau_n$.  Since
$\mathbf w=-\tau_n^{-1}\widehat B\mathbf w$, it belongs to the image
of $\widehat B$ and is therefore constant on each part.  Denote its
values on the $p$-, $q$-, and $c$-parts by $w_p,w_q,w_c$,
respectively.  The first two coordinates of
$\widehat B\mathbf w=-\tau_n\mathbf w$ give
\[
 (p+\tau_n)w_p+cw_c=0,
 \qquad
 (q+\tau_n)w_q+cw_c=0.
\]
Since $p,q,c,\tau_n>0$, these equations show that
$w_p,w_q,w_c$ are all nonzero.

The vectors $\mathbf x,\mathbf y,\mathbf w$ form an orthonormal basis
of the space of vectors that are constant on the three parts.  The
normalized characteristic vectors of the three parts form another
orthonormal basis of the same space.  Since the sum of the outer
products of an orthonormal basis depends only on the subspace, for
distinct vertices within the $p$-, $q$-, and $c$-parts, respectively,
\[
 \kappa_{uv}=\frac1p-w_p^2,\qquad
 \kappa_{uv}=\frac1q-w_q^2,\qquad
 \kappa_{uv}=\frac1c-w_c^2.
\]
Moreover,
$pw_p^2+qw_q^2+cw_c^2=1$.  Since all three terms are positive, each
of the preceding three quantities is positive.

For vertices in the $p$- and $c$-parts, the same comparison of the two
orthonormal bases and the first eigenvector equation give
$ \kappa_{uv}=-w_pw_c
 =\frac{c\,w_c^2}{p+\tau_n}>0.$
Similarly, between the $q$- and $c$-parts,
$ \kappa_{uv}=-w_qw_c
 =\frac{c\,w_c^2}{q+\tau_n}>0.$
These are exactly the off-diagonal positions at which $\widehat B$
has entry one.  Hence $\kappa_{uv}>0$
whenever $u\ne v$ and $\widehat B_{uv}=1.$

We have already shown that every one-entry of $B(G)$ remains one in
$\widehat B$.  Conversely, let $u\ne v$ and suppose that
$\widehat B_{uv}=1$.  Then $\kappa_{uv}>0$ by the preceding
inequality.  If $B(G)_{uv}=0$, however,
\eqref{eq:weak-threshold} would give $\kappa_{uv}\leq0$, a
contradiction.  Hence $B(G)_{uv}=1$.  Thus $B(G)$ and $\widehat B$
have the same off-diagonal one-entries.  Their diagonal entries are
also equal to one, so $ B(G)=\widehat B.$
Consequently, $G\cong K_n^\star$.  Conversely,
\eqref{eq:candidate-value} shows that $K_n^\star$ attains equality.
Finally, \eqref{eq:candidate-value} and \eqref{eq:tau-envelope} give
\[
 S_2(K_n^\star)=n-2+\tau_n\leq\frac{8n}{7}-2,
\]
with equality if and only if $7\mid n$.
This completes the proof.

\end{proof}

\end{document}